\renewcommand*{\backref}[1]{}
\renewcommand*{\backrefalt}[4]{%
	\ifcase #1 (Not cited.)%
	\or        (Cited on page~#2.)%
	\else      (Cited on pages~#2.)%
	\fi}
\newcommand{\D}{\mathbb D}
\newcommand{\E}{\mathbb E}
\newcommand{\M}{\mathbb M}
\newcommand{\N}{\mathbb N}
\newcommand{\Z}{\mathbb Z}
\renewcommand{\S}{\mathbb S}
\newcommand{\F}{\mathbb F}
\renewcommand{\P}{{\mathbb P}}
\DeclareMathOperator{\GL}{GL}
\theoremstyle{plain}
\newtheorem{theorem}{Theorem}[section]
\newtheorem{lemma}[theorem]{Lemma}
\newtheorem{proposition}[theorem]{Proposition}
\theoremstyle{definition}
\newtheorem{definition}[theorem]{Definition}
\newtheorem{notation}[theorem]{Notation}
\numberwithin{theorem}{section}
\numberwithin{equation}{section}
\numberwithin{table}{section}
\numberwithin{figure}{section}
\DeclareMathOperator{\End}{End}
\DeclareMathOperator{\Aut}{Aut}
\begin{document}
\title{The autotopism group of a family of commutative semifields}

\author{Lukas K\"olsch$^1$, Alexandra Levinshteyn$^2$, and Milan Tenn$^3$\vspace{0.4cm} \ \\
$^1$ University of South Florida \\\tt lukas.koelsch.math@gmail.com\vspace{0.4cm}\\
$^2$ University of Illinois, Urbana-Champaign \\ \tt alexandraplevinshteyn@gmail.com \vspace{0.4cm}\\
$^3$ Swarthmore College \\\tt mtenn1@swarthmore.edu\vspace{0.4cm}
}

\date{\today}
\maketitle
\abstract{
We completely determine the autotopism group of the (as of now) largest family of commutative semifields found by G\"olo\u{g}lu and K\"olsch. Since this family of semifields generally does not have large nuclei, this process is considerably harder than for families considered in preceding work. Our results show that all autotopisms are semilinear over the degree 2 subfield and that the autotopism group is always solvable. Using known connections, our results also completely determine the automorphism groups of the associated rank-metric codes and the collineation groups of the associated translation planes.}\ \\[2mm]
\noindent\textbf{Keywords:} Semifields, projective planes, autotopism group, collineation group. \\

 \noindent\textbf{Mathematics Subject Classification:} 12K10, 17A35.

\thispagestyle{empty}

\section{Introduction}
A finite \textbf{semifield} $\S = (S,+,\circ)$ is a finite set $S$ equipped with two operations $(+,\circ)$
satisfying the following axioms. 
\begin{enumerate}
\item[(S1)] $(S,+)$ is a group.
\item[(S2)] For all $x,y,z \in S$,
\begin{itemize}
\item $x\circ (y+z) = x \circ y + x \circ z$,
\item $(x+y)\circ z = x \circ z + y \circ z$.
\end{itemize}
\item[(S3)] For all $x,y \in S$, $x \circ y = 0$ implies $x=0$ or $y=0$.
\item[(S4)] There exists $\epsilon \in S$ such that $x\circ \epsilon = x = \epsilon \circ x$.
\end{enumerate}
An algebraic object satisfying the first three of the above axioms is called 
a \textbf{pre-semifield}. While infinite semifields exist, we are in this paper only concerned with the finite case and thus all (pre)-semifields are always assumed to be finite. If $\P = (P,+,\circ)$ is a pre-semifield, then $(P,+)$ is 
an elementary abelian $p$-group \cite[p. 185]{Knuth65}, and $(P,+)$ can be viewed as
an $n$-dimensional $\F_{p}$-vector space $\F_p^n$. If $\circ$ is commutative, we call $\S$ a commutative semifield. 
Note that if $\circ$ is associative then $\S$ is the finite field $\F_{p^n}$ by 
Wedderburn's theorem which states that a finite division ring is a field. 
A pre-semifield $\P = (\F_p^n,+,\circ)$ 
can be converted to a semifield $\S = (\F_p^n,+,\ast)$ using {\em Kaplansky's trick} 
by defining the new multiplication as
\[
	(x \circ e) \ast (e \circ y) = x \circ y,
\]
for any nonzero element $e \in \F_p^n$, making $(e \circ e)$ the multiplicative 
identity of $\S$. Pre-semifields have received a lot of attention from geometers since they can be used to construct non-desarguesian planes, more precisely, translation planes whose duals are also translation planes, see e.g.~\cite{dembowski1997finite, hughesbook} for a comprehensive overview of the connections of semifields to finite geometry. 

A pre-semifield is an $\F_{p}$-algebra, thus the multiplication
is bilinear. Therefore we have $\F_{p}$-linear left and right multiplications $L_x,R_y : \F_p^n \to \F_p^n$, with
\[
L_x(y) := x \circ y =: R_y(x).
\]
The mapping $L_x$ (resp. $R_y$) is a bijection whenever $x \ne 0$ (resp. $y \ne 0$) 
by (S3). By fixing a basis of $\F_p^n$, we can view $L_x$ and $R_y$ as matrices over $\F_p$, i.e., as elements in $M^{n\times n}(\F_p)$. So
\[\mathcal{C}=\{R_y(x) \colon y \in \F_p^n\}\]
is an additively closed set where all non-zero matrices are invertible. Such a set is called a (semifield) \textbf{spread set}. It is also an optimal linear rank-metric code (in  the sense that it satisfies the Singleton bound with equality) for specific parameters, see~\cite{sheekey}. 

One of the most important properties of (pre)-semifields is their \textbf{autotopism group}. 

The autotopism group of a semifield is isomorphic to the automorphism group of the corresponding spread set, which we define now:

\begin{definition}
    The automorphism group $\Aut(\mathcal{C})$ of a spread set $\mathcal{C}\leq M^{n\times n}(\F_p)$ is 
    \[\Aut(\mathcal{C})=\left\{\right(X,Y)\in \GL(n,p)\times \GL(n,p) \colon X\mathcal{C}Y= \mathcal{C}\}.\]
\end{definition}
 The autotopism group of a semifield can for instance be used to show that semifields are inequivalent, see e.g.~\cite{golouglu2023exponential}. In the geometric setting, determining the autotopism group of a (pre)-semifield is equivalent to determining the collineation group of the corresponding translation plane, see~\cite{dembowski1997finite}, and in the coding theory setting, it is equivalent to the automorphism group of the rank-metric code.
 
Certain subgroups of the automorphism group play an important role (see e.g.~\cite{MP12}).
\begin{definition}
    Let $\mathcal{C}(\S)$ be the semifields spread set belonging to a pre-semifield $\S=(\F_p^n,+,\circ)$. Define the sets
    \[\mathcal{N}_m(\S)^*=\left\{ Y\in \GL(n,p) \colon \mathcal{C}Y= \mathcal{C}\right\}, \mathcal{N}_r(\S)^*=\left\{X\in \GL(n,p) \colon X\mathcal{C}=\mathcal{C}\right\}.\]
    Then 
    $\mathcal{N}_m(\S):=\mathcal{N}_m(\S)^*\cup \{0_n\}$ and $\mathcal{N}_r(\S):=\mathcal{N}_r(\S)^*\cup \{0_n\}$
    are called the middle and right nucleus of $\S$, respectively. Here, $0_n \in M^{n\times n}(\F_p)$ is the zero matrix.
\end{definition}
Clearly, $\mathcal{N}_m(\S)^*$ and $\mathcal{N}_r(\S)^*$ are normal subgroups of the automorphism group. The nuclei are always finite fields. It is follows immediate from the definition of the nuclei (see~\cite[Theorem 3.4.]{liebhold2016automorphism}) that
\[\mathcal{N}_r(\S)^* \times \mathcal{N}_m(\S)^* \leq \Aut(\mathcal{C}(\S)) \leq N(\mathcal{N}_r(\S)^*) \times N(\mathcal{N}_m(\S)^*),\]
where $N(S)$ denotes the normalizer of the set $S$ in $\GL(n,\F_p)$. This indicates the following rule of thumb: \emph{The bigger the nuclei of $\S$, the easier it is to determine autotopism group of semifield.} Indeed, if one of the nuclei is very large, computing the autotopism group is comparatively easy, see e.g.~\cite{hui2015autotopism}, where the autotopism group of the Dickson semifields of size $p^{2m}$ is determined, using the fact that one of the nuclei has size $p^m$. Generally, it is however very difficult to determine the autotopism group of a semifield, and for most known families of semifields with small nuclei, the autotopism group is either not known, or determining the autotopism group takes considerable effort, like in the case of the cyclic semifields~\cite{dempwolff2011autotopism}.\\

\textbf{Our contribution.} In this paper, we will determine the autotopism group of the following family of commutative semifields, found in~\cite{golouglu2023exponential}, that is (as of now) the largest known family of commutative semifields. 

\begin{theorem}[\cite{golouglu2023exponential}] \label{thm:SF}
Define a binary operation $\circ$ on $\F_{p}^{2m}\cong \F_{p^m}\times \F_{p^m}$ via 
\begin{equation} \label{eq:SF}
    (x,y) \circ (u,v) = \left(x^qu+xu^q+B(y^qv+ yv^q),x^rv+yu^r+A(yv^r+y^ru)\right)
\end{equation}
where $p$ is odd, $q=p^k$ for some $1 \leq k \leq m-1$, $r=p^{k+m/2}$, $B \in \F_{p^m}$ is a non-square, $A \in \F_{p^m}$ such that $AB \in \F_{p^{m/2}}^\times$, and $m/\gcd(k,m)$ is odd. Then $\S=(\F_{p}^{2m}\cong \F_{p^m}\times \F_{p^m},+,\circ)$ is a pre-semifield.
\end{theorem}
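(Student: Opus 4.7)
The plan is to verify the pre-semifield axioms (S1)--(S3) for $\S = (\F_p^{2m}, +, \circ)$. Axiom (S1) is immediate since $(\F_p^{2m}, +)$ is an elementary abelian $p$-group. For (S2), note that since $q = p^k$ and $r = p^{k+m/2}$, both $x \mapsto x^q$ and $x \mapsto x^r$ are iterates of the Frobenius and hence $\F_p$-linear; each monomial in the formula for $(x,y) \circ (u,v)$ is therefore $\F_p$-bilinear in the pair $\bigl((x,y),(u,v)\bigr)$, so distributivity holds. Only (S3), the absence of nontrivial zero divisors, is substantive.

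For (S3) I would argue by contradiction: assume $(x,y) \circ (u,v) = (0,0)$ with $(x,y) \ne (0,0)$ and $(u,v) \ne (0,0)$. The two coordinates yield
\[
x^q u + x u^q + B(y^q v + y v^q) = 0, \qquad x^r v + y u^r + A(y v^r + y^r u) = 0.
\]
The first step is a case analysis on whether any of $x,y,u,v$ vanishes. For instance, if $y = 0$ then necessarily $x \ne 0$ and the second equation reads $x^r v = 0$, forcing $v = 0$; the first equation then collapses to $x^q u + x u^q = 0$, and since $u \ne 0$ we may divide by $u^{q+1}$ to obtain $z^q = -z$ with $z = x/u \ne 0$. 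Because $m/\gcd(k,m)$ is odd, one checks that $\gcd(q^2-1,p^m-1) = p^{\gcd(k,m)}-1$, on whose multiplicative group $F_q$ acts trivially, so $z^q = z$; combined with $z^q = -z$ and $p$ odd this forces $z = 0$, a contradiction. The other degenerate subcases are analogous, and the condition $AB \in \F_{p^{m/2}}^\times$ enters to tame the asymmetric second equation.

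In the generic case $x,y,u,v$ all nonzero, the structural identity $r = q\cdot p^{m/2}$ is crucial: with $\phi\colon z \mapsto z^{p^{m/2}}$ the nontrivial Galois involution of $\F_{p^m}/\F_{p^{m/2}}$ one has $z^r = \phi(z^q)$ for every $z \in \F_{p^m}$. Applying $\phi$ to the second equation converts every $F_r$-semilinear term into an $F_q$-semilinear one, producing a second equation of the same semilinear type as the first. I would then form the $\F_{p^m}$-linear combination of the first equation with the $\phi$-image of the second chosen to kill the $F_q$-semilinear part; the hypothesis $AB \in \Fix(\phi) = \F_{p^{m/2}}^\times$ is precisely what allows such a cancellation, because it forces the relevant coefficient ratio between the two equations to be $\phi$-invariant. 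The surviving identity should reduce to the shape $\xi^2 = B \eta^2$ for some $\xi,\eta \in \F_{p^m}$ polynomial in $x,y,u,v$; since $B$ is a non-square in $\F_{p^m}^\times$, this forces $\xi = \eta = 0$, and tracing back yields $(x,y) = (0,0)$, the desired contradiction.

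The main obstacle is the elimination in the generic case: the two equations are coupled in all four unknowns through two different Frobenius twists $F_q$ and $F_r$, and all four hypotheses ($p$ odd, $B$ non-square, $AB \in \F_{p^{m/2}}^\times$, and $m/\gcd(k,m)$ odd) have to be invoked at the right moments. Locating the precise $\F_{p^m}$-linear combination that simultaneously cancels the $F_q$-semilinear terms and produces a norm-type expression to which the non-squareness of $B$ applies is the crux of the argument, and corresponds essentially to the existence proof given in~\cite{golouglu2023exponential}.
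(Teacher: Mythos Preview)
The present paper does not prove this theorem; it is quoted from~\cite{golouglu2023exponential} and used as a black box, so there is no in-paper argument to compare against.

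Your outline is nonetheless the right shape. Axioms (S1) and (S2) are indeed immediate for the reasons you give, and your treatment of the degenerate subcase $y=0$ is correct: the reduction to $z^{q-1}=-1$ and the observation that $z^{q^2}=z$ places $z$ in $\F_{p^e}$, whence $z^q=z$ and so $z=0$, is exactly Lemma~\ref{lem_thm}(i) of this paper.

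The genuine gap is the generic case $x,y,u,v\in\M^\times$. You correctly isolate the structural identity $r=qQ$ and the idea of applying the involution $\phi\colon z\mapsto z^{Q}$ to the second equation, and you correctly anticipate that the hypothesis $AB\in\F_{Q}^\times$ is what should make an elimination possible. But you then only assert that you ``would form the $\F_{p^m}$-linear combination \dots\ chosen to kill the $F_q$-semilinear part'' and that ``the surviving identity should reduce to the shape $\xi^2=B\eta^2$'', without exhibiting the combination, without verifying that the semilinear parts actually cancel, and without checking that $\xi=\eta=0$ really forces one of $(x,y)$, $(u,v)$ to vanish. The second coordinate $x^r v + y u^r + A(yv^r+y^r u)$ mixes the four variables asymmetrically, and the elimination is a genuine computation, not a formality; nothing in your text rules out, for instance, that the would-be $\xi,\eta$ vanish identically, or that $\xi=\eta=0$ cuts out a nontrivial variety in $(x,y,u,v)$. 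As written, this is a plausible strategy with the decisive step deferred back to~\cite{golouglu2023exponential}, not a self-contained proof.
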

Note that this implies that $A$ is a non-square in $\F_{p^m}$.
Consequently, the semifield spread set $\mathcal{C}(\S)$ is comprised of the mappings
\begin{equation} \label{eq:spreadset}
    R_{u,v} : (x,y) \mapsto (R^{(1)}_{u,v}(x,y),R^{(2)}_{u,v}(x,y)),
\end{equation}

with
\begin{equation}\label{eq:spreadset2}
    	R^{(1)}_{u,v}(x,y) = x^qu+xu^q+B(y^qv+yv^q) \textrm{ and } R^{(2)}_{u,v}(x,y) =  x^rv+Axv^r+Ay^ru+yu^r.
\end{equation}
 Then 
\[\mathcal{C}(\S)=\{ R_{u,v} : (u,v) \in \F_{p^m} \times \F_{p^m}\}.\]
The nuclei were determined in~\cite[Theorem 7.2.]{golouglu2023exponential}:
\begin{theorem} \label{thm:nuclei}
    Let $\S=(\F_{p^m}\times \F_{p^m},+,\circ)$ with the multiplication as in Eq.~\eqref{eq:SF}. 
    Then 
    \begin{align*}
    \mathcal{N}_r(\S)&=\{(x,y)\mapsto (ax,ay) \colon a \in \F_{p^{\gcd(k+m/2,m)}} \}, \\
    \mathcal{N}_m(\S)&=\{(x,y)\mapsto (ax,ay) \colon a \in \F_{p^{2\gcd(k+m/2,m)}} \}.
    \end{align*} 
\end{theorem}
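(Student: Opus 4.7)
The plan is to establish set equality for each nucleus by proving two containments; the ``$\supseteq$'' direction is a direct verification, while the ``$\subseteq$'' direction is the main work.

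For the ``$\supseteq$'' direction, take $X_a\colon (x,y)\mapsto (ax,ay)$ with $a$ in the claimed subfield and compute $X_a\circ R_{u,v}$ using the explicit formulas in Eq.~\eqref{eq:spreadset2} to confirm that the product has the form $R_{u',v'}$ for an appropriate $(u',v')$ determined by $(u,v)$ and a Frobenius power of $a$. The verification reduces to certain identities $a^{p^j}=a^{p^\ell}$ that hold precisely when $a$ lies in the stated subfield. A preparatory observation is that the hypothesis ``$m/\gcd(k,m)$ odd'' forces the divisibility $\gcd(k+m/2,m)\mid \gcd(k,m)$, via a $2$-adic argument: writing $v_2(m)=s$, the hypothesis gives $v_2(\gcd(k,m))=s$, hence $v_2(k)\ge s$, hence $v_2(k+m/2)=s-1$, and a direct comparison shows that the odd part of $\gcd(k+m/2,m)$ divides the odd part of $\gcd(k,m)$. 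The parallel verification for $\mathcal{N}_m$ uses $R_{u,v}\circ Y_a$ in place of $X_a\circ R_{u,v}$, with $(u',v')$ obtained from $(u,v)$ by a Frobenius twist.

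For the ``$\subseteq$'' direction, let $X\in\mathcal{N}_r(\S)^\ast$ and write it as a pair of $\F_p$-linear maps $f_1,f_2\colon \F_{p^m}^2\to\F_{p^m}$. Each $f_i$ can be expanded in the linearized-polynomial basis as $f_i(x,y)=\sum_j \alpha_{i,j}x^{p^j}+\beta_{i,j}y^{p^j}$. The equation $X\circ R_{u,v}=R_{u',v'}$ becomes an identity of $\F_{p^m}$-polynomials in $x,y$ that must hold for all $(u,v)\in\F_{p^m}^2$. Matching coefficients of the distinct monomials $x^{p^j},y^{p^j}$ and separating the $u$- and $v$-dependence yields a large linear system on the $\alpha_{i,j},\beta_{i,j}$; only the Frobenius exponents already present in $R_{u,v}$ (namely $1,q,r,qr$) can survive, which forces all off-diagonal and higher-order coefficients of $X$ to vanish and leaves $X$ as a diagonal scalar map by some $a$ constrained to the claimed subfield. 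The same blueprint handles $\mathcal{N}_m$, using $R_{u,v}\circ Y=R_{u',v'}$ in place of $X\circ R_{u,v}=R_{u',v'}$.

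The main obstacle is the bookkeeping in the ``$\subseteq$'' direction: the cross-term $Ay^r u$ in $R^{(2)}$, together with the $B(y^qv+yv^q)$ term in $R^{(1)}$, couples the two coordinates so that the coefficient-matching produces many interlocking constraints that must be disentangled simultaneously. The hypotheses that $B$ is a non-square and that $AB\in\F_{p^{m/2}}^\times$ are needed to rule out ``accidental'' non-diagonal solutions that the otherwise symmetric structure could permit, and the oddness of $m/\gcd(k,m)$ is what pins down the precise subfield of $\F_{p^m}$ that appears on the diagonal.
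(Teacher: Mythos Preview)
The paper does not prove Theorem~\ref{thm:nuclei}; it is quoted verbatim from \cite[Theorem~7.2]{golouglu2023exponential} and used as input for the rest of the argument. There is therefore no proof in the present paper to compare your proposal against.

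As to the proposal itself: it is a plan, not a proof. The ``$\supseteq$'' direction for $\mathcal{N}_r$ does go through essentially as you describe (one finds the two constraints $a^{q}=a$ and $a^{r}=a$, and the latter is the stronger one since $\D\subset\E$). Your divisibility observation $d\mid e$ is correct and in fact $e=2d$ holds, as the paper records in Notation~\ref{notation}. One small inaccuracy: the Frobenius exponents appearing in $R_{u,v}$ are $1,q$ in the first coordinate and $1,r$ in the second; $qr$ does not occur.

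The ``$\subseteq$'' direction, however, is the entire content of the theorem, and you have not carried any of it out. Saying that ``only the Frobenius exponents already present in $R_{u,v}$ can survive'' is not an argument but a restatement of what must be proved: an arbitrary $X\in\mathcal{N}_r(\S)^\ast$ is an $\F_p$-linear map on $\F_{p^m}^2$ with up to $4m$ free coefficients, and showing that composing with every $R_{u,v}$ again lands in $\mathcal{C}$ forces all but the scalar ones to vanish is a substantial computation that genuinely uses the non-square hypotheses on $A$ and $B$. Your final paragraph correctly identifies this as the obstacle but does not address it. If you intend to supply a self-contained proof, that computation must actually be written out.
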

In particular, it is possible to choose $k,m$ in a way that $\mathcal{N}_r(\S)$ is the smallest possible (i.e., isomorphic to the prime field $\F_p$) and $\mathcal{N}_m(\S) \cong \F_{p^2}$. Thus, the techniques that simplify computing the autotopism group based on using large nuclei do not apply. 

The main result of this paper is:
\begin{theorem}[Main result] \label{thm:main}
    Let $\S$ be a pre-semifield with multiplication as defined in Theorem~\ref{thm:SF} and let $e=\gcd(k,m)$. The autotopism group of $\S$ (or equivalently, the automorphism group of its spread set $\Aut(\mathcal{C}(\S))$) is isomorphic to
    \begin{enumerate}
        \item $\Aut(\mathcal{C}(\S)) \cong \Z_{p^m-1}\times \Z_{p^e-1}$, if $A^2$ is not an $(r-1)$-st power. 
        \item $\Aut(\mathcal{C}(\S)) \cong (\Z_{p^m-1} \times \Z_{p^e-1}\times \Z_2) \rtimes \Z_{m/i}$, if $A^2$ is an $(r-1)$-st power and $i$ is the smallest positive integer such that $A^{p^i-1}$ is an $(r-1)$-st power.
    \end{enumerate}
    Moreover, $\Aut(\mathcal{C}(\S)) \leq \Gamma L(2,\F_{p^m})$ in both cases.
\end{theorem}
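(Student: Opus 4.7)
The plan is to first catalogue the ``obvious'' autotopisms --- those coming from the known nuclei, diagonal scalings by $\F_{p^m}^\times$, a Frobenius-type Galois action, and possibly an involution swapping the two coordinates of $\F_{p^m} \times \F_{p^m}$ --- and then prove that these exhaust $\Aut(\mathcal{C}(\S))$. An autotopism is a pair $(X,Y) \in \GL(2m,p)^2$ with $X \mathcal{C}(\S) Y = \mathcal{C}(\S)$; since $\S$ is commutative, a symmetry relating $(X,Y)$-autotopisms to swapped $(Y,X)$-type autotopisms should cut the bookkeeping roughly in half.

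The first and hardest step is to prove $\Aut(\mathcal{C}(\S)) \leq \Gamma L(2,\F_{p^m}) \times \Gamma L(2,\F_{p^m})$, i.e., that both components are $\F_{p^m}$-semilinear. Since $\mathcal{N}_m(\S) \cong \F_{p^{2\gcd(k+m/2,m)}}$ is in general strictly smaller than $\F_{p^m}$, the standard trick of lying inside the normalizer of a large nucleus fails. I would attack this via the Dembowski--Ostrom structure: each $R^{(i)}_{u,v}$ is a sum of monomials $x^{p^a} y^{p^b} u^{p^c} v^{p^d}$ in which only the Frobenius exponents $1, q, r$ appear. Conjugating by a general $\F_p$-linear $X$ would introduce Frobenius exponents from across $\operatorname{Gal}(\F_{p^m}/\F_p)$; forcing the transformed map to lie in $\mathcal{C}(\S)$ and using the hypothesis that $m/\gcd(k,m)$ is odd should pin $X$ and $Y$ into the $\F_{p^m}$-semilinear group.

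Once semilinearity is in hand, I would write $X = M_1 \sigma$ and $Y = M_2 \tau$ with $M_i \in \GL(2,\F_{p^m})$ and $\sigma, \tau \in \operatorname{Gal}(\F_{p^m}/\F_p)$, and expand $X R_{u,v} Y = R_{u',v'}$ as a polynomial identity in $(x,y,u,v)$. The monomial supports of $R^{(1)}$ (exponents tied to $q$) and $R^{(2)}$ (exponents tied to $r$) differ, which should force the $M_i$ to be either diagonal or anti-diagonal with respect to the $\F_{p^m} \oplus \F_{p^m}$ decomposition. Coefficient matching then yields scalar equations over $\F_{p^m}^\times$ involving $A$ and $B$: the diagonal branch produces the factor $\Z_{p^m-1} \times \Z_{p^e-1}$ (with $e = \gcd(k,m)$ governing the admissible Frobenius twists in the diagonal case), while the anti-diagonal branch --- which contributes a $\Z_2$ by swapping the two coordinates --- is consistent with the equations exactly when $A^2$ is an $(r-1)$-st power, producing the case split of the theorem.

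In the second case, an additional Frobenius twist $x \mapsto x^{p^i}$ combined with the swap preserves the spread set if and only if $A^{p^i-1}$ is an $(r-1)$-st power, yielding the cyclic factor $\Z_{m/i}$ acting by conjugation on the abelian normal subgroup $\Z_{p^m-1} \times \Z_{p^e-1} \times \Z_2$; this both assembles the stated semidirect product and makes solvability immediate. The main obstacle throughout is the semilinearity reduction in the second step, since it cannot be shortcut via the nuclei and must rely on a delicate polynomial-support argument combined with the parity hypothesis on $m/\gcd(k,m)$; after that point, the remaining work is a large but systematic monomial-coefficient bookkeeping that the Dembowski--Ostrom decomposition makes tractable.
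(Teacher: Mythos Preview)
Your high-level strategy --- reduce to $\Gamma L(2,\F_{p^m})$, then force diagonal/anti-diagonal shape, then match coefficients --- is broadly aligned with the paper, but the order of your reductions inverts the paper's, and this matters. The paper does \emph{not} first prove $\F_{p^m}$-semilinearity of a general autotopism. Instead, using only the weak $\D$- and $\E$-semilinearity inherited from the nuclei (Theorem~\ref{thm:semilinear}), it shows directly that the off-diagonal blocks $f_2,f_3$ of $X$ vanish (Propositions~\ref{prop:f20} and~\ref{prop:f30}, via the key elimination Lemmas~\ref{lemma:qelim} and~\ref{lemma:q3elim}). Only \emph{after} $X$ is known to be diagonal does the paper prove that all remaining subfunctions are monomials (Theorem~\ref{thm:monomials}), which is where full $\F_{p^m}$-semilinearity finally emerges. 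The diagonal-first route is not an accident: once $f_2=f_3=0$, setting $u=0$ or $v=0$ isolates equations of the clean form $f\circ(x^r v + Axv^r) = (x^r t + Axt^r)\circ g$, to which Lemmas~\ref{lemma:rmonomial} and~\ref{lemma:finmonomial} apply.

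The gap in your plan is precisely the step you flag as the ``main obstacle'': establishing $\Gamma L(2,\F_{p^m})$ up front from a generic $\F_p$-linear $(X,Y)$. Your proposed mechanism (Frobenius-exponent support plus the oddness of $m/\gcd(k,m)$) is too vague to be a proof sketch --- with all four blocks of $X$ and of $Y$ arbitrary $\F_p$-linear maps, the composed polynomials have a priori full support, and there is no clean single invariant that rules out mixing of $q$- and $r$-type exponents without first cutting down the shape of $X$. Two smaller corrections: the paper shows $X$ is always \emph{diagonal} (never anti-diagonal); only $Y$ bifurcates into diagonal or anti-diagonal (Proposition~\ref{prop:monomial}), so your description of the $\Z_2$ as a coordinate swap on both sides is off. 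And the Frobenius twist $x\mapsto x^{p^i}$ is already present in the diagonal branch (Theorem~\ref{thm:diagonalcase}), not only ``combined with the swap''; the admissible $i$ are those with $A^{p^i-1}$ an $(r-1)$-st power, and the anti-diagonal branch exists for exactly the same set of $i$ once $A^2$ is an $(r-1)$-st power.
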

In fact, we determine all automorphisms explicitly --- the precise description can be found for the first case in Theorem~\ref{thm:diagonalcase} for $i=0$. The second case is covered by Theorem~\ref{thm:diagonalcase} together with Theorem~\ref{thm:antidiagonalcase}.

We want to note that even though the nuclei are possibly small, the autotopism group of these semifields is always large. This in itself is also unusual among the known semifields, see e.g.~\cite{dempwolff2011autotopism,hui2015autotopism}, where the autotopism groups of semifields were computed with the result that the entire autotopism group contains the group generated by the nuclei as a small index subgroup. 

The rest of the paper is devoted to prove Theorem~\ref{thm:main}. 

\section{Notation, overview, and preliminaries}
We fix some information we will use throughout the paper, following the notation of~\cite{golouglu2023exponential} that introduced the pre-semifields defined by Eq.~\eqref{eq:SF}.\\
\noindent\begin{minipage}{0.35\textwidth}
\begin{tikzpicture}
    \node (Q1) at (1,0) {$\F_{p}$};
    \node (Q2) at (2,2) {$\D = \F_{p^d}$};
    \node (Q3) at (3,4) {$\E = \F_{p^e}$};
    \node (Q4) at (0,3) {$\F_{p^{m/2}}$};
    \node (Q5) at (1,5) {$\M = \F_{p^m}$};
    \node (Q6) at (2,7) {$\F_{p^{2m}}$};

    \draw[dotted] (Q1)--(Q2) node[pos=0.7, below, inner sep=0.25cm] {$d$};
    \draw (Q2)--(Q3) node[pos=0.7, below, inner sep=0.25cm] {$2$}; 
    \draw (Q2)--(Q4);
    \draw (Q3)--(Q5) node[pos=0.3, above, inner sep=0.25cm] {$\frac{m}{e}$};
    \draw (Q4)--(Q5) node[pos=0.3, above, inner sep=0.25cm] {$2$};
    \draw (Q5)--(Q6) node[pos=0.3, above, inner sep=0.25cm] {$2$}; 
\end{tikzpicture}
\end{minipage}
\begin{minipage}{0.65\textwidth}
\begin{notation} \label{notation}
\begin{itemize}
\setlength\itemsep{0.3em}
\item[]
\item $p$ is an odd prime.
\item $n = 2m$, $m$ is even.
\item $Q = p^{m/2}$, \quad $Q^2 = p^m$.
\item $q = p^k$, \quad $r = p^{k+m/2} = Qq$ \ \ with $1 \le k \le m-1$. 
\item $\S_{q,A,B}=(\M\times \M,+,\circ)$ is a pre-semifield defined by Eq.~\eqref{eq:SF} with parameters $q,A,B$. $B$ is a non-square in $\F_{p^m}$ and $AB \in \F_Q^\times$.
\item $\mathcal{C}(\S_{q,A,B})$ is the spread set of $\S_{q,A,B}$ containing the mappings as defined in Eqs.~\eqref{eq:spreadset} and~\eqref{eq:spreadset2}.
\item $e = \gcd(k,m)$ with $m/e$ odd. 
\item $d = \gcd(k+m/2,m)$.
\item $e = 2d$ by Lemma \ref{lem_thm}.
\item $\E = \F_{q} \cap \M = \F_{q^2} \cap \M = \F_{r^2} \cap \M$ by Lemma \ref{lem_thm}.
\item $\D = \F_{r} \cap \M$.
\end{itemize}\vspace{5mm}
\end{notation}
\end{minipage}

We need some technical lemmas that will be used throughout the paper. The first one is basic and well known, the second one was proven in~\cite{golouglu2023exponential}.
\begin{lemma} \label{lem:gcd}
	Let $i,m \in \N$ and $p$ be a prime. Then
	\begin{itemize}
	\item $\gcd(p^i-1,p^m-1) = p^{\gcd(i,m)}-1$.
	\item $\gcd(p^i+1,p^m-1)=\begin{cases}
		1 & \text{if } m/\gcd(i,m) \text{ odd, and } p=2, \\
		2 & \text{if } m/\gcd(i,m) \text{ odd, and } p>2, \\
		p^{\gcd(i,m)}+1 & \text{if } m/\gcd(i,m) \text{ even}.
	\end{cases}$
\end{itemize}
\end{lemma}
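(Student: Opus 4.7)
For the first bullet my plan is to run the Euclidean algorithm on the exponents via the reduction $\gcd(p^a-1,p^b-1)=\gcd(p^{a-b}-1,p^b-1)$ for $a\ge b$, which follows from $p^a-1 = p^{a-b}(p^b-1)+(p^{a-b}-1)$. Iterating this mirrors the Euclidean algorithm on $(i,m)$ and terminates at $p^{\gcd(i,m)}-1$.

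For the second bullet, set $d=\gcd(i,m)$, write $i=da$ and $m=db$ with $\gcd(a,b)=1$, and let $g=\gcd(p^i+1,p^m-1)$. The key observation is that modulo $g$ we have $p^i\equiv -1$ and $p^m\equiv 1$, so $p^{2i}\equiv 1$ and the multiplicative order $t$ of $p$ modulo $g$ divides $\gcd(2i,m)$. A short check gives $\gcd(2i,m)=d$ when $b$ is odd, and $\gcd(2i,m)=2d$ when $b$ is even (which forces $a$ odd). If $b$ is odd, $t\mid d$ yields $p^d\equiv 1\pmod g$, hence $p^i\equiv 1\pmod g$; combined with $p^i\equiv -1\pmod g$ this forces $g\mid 2$, and inspecting parity of $p^i+1$ and $p^m-1$ then produces $g=1$ when $p=2$ and $g=2$ when $p$ is odd.

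When $b$ is even, the lower bound $p^d+1\mid g$ is immediate: $p^d+1\mid p^i+1$ via $x+1\mid x^a+1$ for $a$ odd, and $p^d+1\mid p^{2d}-1\mid p^m-1$ since $2d\mid m$. For the matching upper bound $g\mid p^d+1$ I would argue prime by prime. For any odd prime power $\ell^v$ exactly dividing $g$, the group $(\Z/\ell^v\Z)^\times$ is cyclic, so $p^{2d}\equiv 1\pmod{\ell^v}$ forces $p^d\equiv\pm 1\pmod{\ell^v}$; the $+1$ option would give $p^i\equiv 1\pmod{\ell^v}$, contradicting $p^i\equiv -1$, so $\ell^v\mid p^d+1$. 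For $p=2$ this already suffices, since $g$ is odd. For $p$ odd the remaining $2$-adic part is controlled by lifting-the-exponent: $v_2(p^i+1)=v_2(p^d+1)$ (because $a$ is odd) and $v_2(p^m-1)=v_2(p^d-1)+v_2(p^d+1)+v_2(b)-1 > v_2(p^d+1)$ (because $b$ is even), so $v_2(g)=v_2(p^d+1)$, and altogether $g=p^d+1$.

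The principal obstacle I anticipate is precisely this $2$-adic bookkeeping in the even case with $p$ odd; it is the one place where the cyclic-group argument does not go through directly and where one must invoke lifting-the-exponent (or an explicit case split on $v_2(b)$). Everything else is a routine translation of divisibility conditions into statements about the multiplicative order of $p$.
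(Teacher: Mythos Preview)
Your argument is correct. The Euclidean reduction for the first bullet is standard and valid. For the second bullet your case split on the parity of $b=m/\gcd(i,m)$ is clean: when $b$ is odd the order argument forces $g\mid 2$, and when $b$ is even the divisibility $p^d+1\mid g$ together with the prime-by-prime upper bound gives equality. The one delicate point you flagged yourself---the $2$-adic valuation when $p$ is odd and $b$ is even---is handled correctly: with $a$ odd the factorisation $x^a+1=(x+1)(x^{a-1}-\cdots+1)$ has an odd cofactor, giving $v_2(p^i+1)=v_2(p^d+1)$, and the lifting-the-exponent identity $v_2((p^d)^b-1)=v_2(p^d-1)+v_2(p^d+1)+v_2(b)-1$ (valid since $p^d$ is odd and $b$ is even) indeed exceeds $v_2(p^d+1)$ because $v_2(p^d-1)\ge 1$ and $v_2(b)\ge 1$.

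As for comparison with the paper: the paper does not actually prove this lemma. It is introduced with the remark that it is ``basic and well known'' and stated without proof, so there is nothing to compare against. Your write-up supplies exactly the kind of elementary justification one would expect for such a lemma.
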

	
\begin{lemma} \cite[Lemma 4.3]{golouglu2023exponential}\label{lem_thm}
We have
\begin{enumerate}[(i)]
\item $(-1) \not\in (\M^\times)^{q-1}$.
\item $\E = \F_{q} \cap \M = \F_{q^2} \cap \M = \F_{r^2} \cap \M$.
\end{enumerate}
\end{lemma}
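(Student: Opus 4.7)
The plan is to reduce both claims to elementary arithmetic in $\M^\times$ and standard subfield theory, exploiting the hypothesis that $m/e$ is odd (with $e=\gcd(k,m)$) together with the preceding Lemma~\ref{lem:gcd}.

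For part (i), I would first observe that $(\M^\times)^{q-1}$ is the unique cyclic subgroup of $\M^\times$ of order $(p^m-1)/\gcd(q-1,p^m-1)$. By Lemma~\ref{lem:gcd}, $\gcd(q-1,p^m-1)=p^e-1$, so this subgroup has order $(p^m-1)/(p^e-1)$. Since $-1$ is the unique element of order $2$ in $\M^\times$, it lies in $(\M^\times)^{q-1}$ if and only if $(p^m-1)/(p^e-1)$ is even. But
\[
\frac{p^m-1}{p^e-1} \;=\; 1+p^e+p^{2e}+\cdots+p^{(m/e-1)e}
\]
is a sum of $m/e$ odd integers (since $p$ is odd), hence odd by hypothesis. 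So $-1\notin(\M^\times)^{q-1}$.

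For part (ii), I would use the standard identity $\F_{p^a}\cap\F_{p^b}=\F_{p^{\gcd(a,b)}}$. This immediately yields $\F_q\cap\M=\F_{p^{\gcd(k,m)}}=\E$. For the remaining two intersections, note $r^2=p^{2k+m}$, so $\F_{q^2}\cap\M=\F_{p^{\gcd(2k,m)}}$ and $\F_{r^2}\cap\M=\F_{p^{\gcd(2k+m,m)}}=\F_{p^{\gcd(2k,m)}}$. The statement thus reduces to the claim $\gcd(2k,m)=e$. To see this, recall that $m$ is even (from the notation) and $m/e$ is odd, forcing $v_2(e)=v_2(m)$ and hence $v_2(k)\geq v_2(m)$. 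Then $v_2(2k)>v_2(m)$, so $v_2(\gcd(2k,m))=v_2(m)=v_2(e)$, while for each odd prime $\ell$ one has $v_\ell(\gcd(2k,m))=v_\ell(\gcd(k,m))=v_\ell(e)$ automatically. Combining yields $\gcd(2k,m)=e$, as needed.

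Neither part presents a real obstacle: both reduce to elementary parity and $2$-adic valuation computations once the group-theoretic and field-theoretic translations are in place. The main conceptual observation is that the hypothesis ``$m/e$ odd'' is precisely what powers both arguments---it controls the parity of $(p^m-1)/(p^e-1)$ in part (i), and forces $v_2(k)\geq v_2(m)$ in part (ii).
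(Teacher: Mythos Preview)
Your proof is correct. Note, however, that the paper does not actually supply a proof of this lemma: it is quoted verbatim from \cite[Lemma 4.3]{golouglu2023exponential}, so there is no ``paper's own proof'' to compare against. Your argument is a clean self-contained verification: part (i) via the parity of the geometric sum $(p^m-1)/(p^e-1)$, and part (ii) via the identity $\F_{p^a}\cap\F_{p^b}=\F_{p^{\gcd(a,b)}}$ together with a $2$-adic valuation computation showing $\gcd(2k,m)=\gcd(k,m)$ under the hypothesis that $m/e$ is odd. Every step checks out, and the key observation---that $m/e$ odd with $m$ even forces $v_2(k)\ge v_2(m)$---is exactly what is needed.
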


The following is an adaption of Theorem~\cite[Theorem 2.9.]{MP12}
\begin{theorem} \label{thm:semilinear}
    Using Notation~\ref{notation}, let $\S:=\S_{q,A,B}$.
    Assume that $X,Y \in \GL(n,p)$ and $X\Aut(\mathcal{C}(\S))Y^{-1}=\Aut(\mathcal{C}(\S))$. Then $Y$ is semilinear over $\E$ and $X$ is semilinear over $\D$.
\end{theorem}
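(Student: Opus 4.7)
The plan is to show that $X$ and $Y$ must normalize the multiplicative groups of the right and middle nuclei respectively, and then invoke the classical fact that the normalizer in $\GL(n,p)$ of a full scalar subfield is the corresponding semilinear group. Abbreviate $\mathcal{A} := \Aut(\mathcal{C}(\S))$. The nuclei sit intrinsically inside $\mathcal{A}$ via the coordinate projections:
\[
\mathcal{N}_r(\S)^* \times \{\Id\} = \mathcal{A} \cap \bigl(\GL(n,p) \times \{\Id\}\bigr), \qquad \{\Id\}\times \mathcal{N}_m(\S)^* = \mathcal{A} \cap \bigl(\{\Id\} \times \GL(n,p)\bigr).
\]
The hypothesis $X\mathcal{A}Y^{-1} = \mathcal{A}$, read as the equality of the set $\{(XA, BY^{-1}) : (A,B) \in \mathcal{A}\}$ with $\mathcal{A}$, is equivalent (by setting $(A,B) = (\Id,\Id)$ and conversely using that $\mathcal{A}$ is a group) to $(X, Y^{-1}) \in \mathcal{A}$, i.e., $X\mathcal{C}(\S)Y^{-1} = \mathcal{C}(\S)$.

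Given this, note first that $X^{-1}\mathcal{C}(\S) = \mathcal{C}(\S)Y^{-1}$; then for every $N \in \mathcal{N}_r(\S)^*$ one computes directly
\[
(XNX^{-1})\mathcal{C}(\S) = XN\bigl(X^{-1}\mathcal{C}(\S)\bigr) = XN\bigl(\mathcal{C}(\S)Y^{-1}\bigr) = X\bigl(N\mathcal{C}(\S)\bigr)Y^{-1} = X\mathcal{C}(\S)Y^{-1} = \mathcal{C}(\S),
\]
so $XNX^{-1} \in \mathcal{N}_r(\S)^*$, giving $X\mathcal{N}_r(\S)^*X^{-1} = \mathcal{N}_r(\S)^*$. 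The analogous right-sided calculation yields $Y^{-1}MY \in \mathcal{N}_m(\S)^*$ for all $M \in \mathcal{N}_m(\S)^*$, so $Y$ normalizes $\mathcal{N}_m(\S)^*$. By Theorem~\ref{thm:nuclei}, these nuclei are precisely the full groups of $\D^\times$- and $\E^\times$-scalar multiplications on $\M \times \M$, which carries natural $\D$- and $\E$-vector space structures of dimensions $n/d$ and $n/e$ over $\F_p$, respectively.

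Finally, the classical normalizer formula (the content of \cite[Theorem~2.9]{MP12} in the general setting, and proved by a direct centralizer computation using that any $\phi$ conjugating scalars to scalars satisfies $\phi(ax) = \sigma(a)\phi(x)$ with $\sigma$ a field automorphism) says that the normalizer in $\GL(n,p)$ of the full scalar subgroup $\F_{p^a}^\times \subseteq \GL(n/a, \F_{p^a}) \subseteq \GL(n,p)$ is exactly $\Gamma L(n/a, \F_{p^a})$, the group of $\F_{p^a}$-semilinear bijections. Applying this with $a=d$ and $a=e$ yields $X \in \Gamma L(n/d, \D)$ and $Y \in \Gamma L(n/e, \E)$, the desired semilinearity. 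The main technical points are the intrinsic identification of the nuclei inside $\mathcal{A}$ and the careful left-right bookkeeping in the chain of equalities above; once these are in place, the conclusion is a direct appeal to the normalizer result, and the unusually small nuclei of this family play no adverse role in the argument.
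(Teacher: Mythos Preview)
Your proof is correct and follows essentially the same approach as the paper: both arguments show that $X$ normalizes $\mathcal{N}_r(\S)^*$ and $Y$ normalizes $\mathcal{N}_m(\S)^*$ (the paper by citing \cite[Corollary~2.3]{MP12}, you by a direct chain of equalities using $X\mathcal{C}Y^{-1}=\mathcal{C}$), and then deduce semilinearity from the fact that conjugation on the scalar subfield is a Frobenius automorphism. Your preliminary step unpacking the hypothesis $X\mathcal{A}Y^{-1}=\mathcal{A}$ into $(X,Y^{-1})\in\mathcal{A}$ is a reasonable reading of the somewhat unusual notation; the paper's own proof simply proceeds as if the hypothesis were $X\mathcal{C}(\S)Y^{-1}=\mathcal{C}(\S)$.
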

\begin{proof}
    By~\cite[Corollary 2.3.]{MP12}, we get that $Y\mathcal{N}_m(\S)Y^{-1}=\mathcal{N}_m(\S)$ and $X\mathcal{N}_r(\S)X^{-1}=\mathcal{N}_r(\S)$. By Theorem~\ref{thm:nuclei}, ${N}_m(\S)=\{f_a \colon (x,y)\mapsto a\cdot (x,y) \mid a \in \E\}$ so $f_a \mapsto Yf_aY^{-1}$ is an isomorphism on the (field) ${N}_m(\S)$, so it necessarily has the structure of a Frobenius automorphism, $Yf_aY^{-1}=f_{a^{p^i}}$ for some $i$ and for all $a \in \E$. In particular, $Y$ is semilinear over $\E$.   

    Similarly,  ${N}_r(\S)=\{f_a \colon (x,y)\mapsto a\cdot (x,y) \mid a \in \D\}$, so with the same argumentation as above, we get that $X\mathcal{N}_r(\S)X^{-1}=\mathcal{N}_r(\S)$ implies that $X$ is semilinear over $\D$.
\end{proof}
As mentioned before, the pre-semifields we are interested in have generally small nuclei, so this result is in general not particularly useful~--- if $\D\cong \F_p$ then we get for instance no information on $X$ and only little information on $Y$. However, the theorem will still be useful in the case that the nuclei are large. 

Since the pre-semifield $\S_{q,A,B}$ is defined on $\M \times \M$, it is convenient to adopt the following notations for automorphisms
Let $(X,Y) \in \GL(\M \times \M)^2$. With Theorem~\ref{thm:semilinear} in mind, we will always assume that $X,Y$ are semilinear over $\D$ and $\E$, respectively. Then write 
\[X=\begin{bmatrix}
        f_1&f_2\\f_3&f_4
    \end{bmatrix} \text{ and }Y=\begin{bmatrix}
        g_1&g_2\\g_3&g_4
    \end{bmatrix},\]
    where the $f_i$ and $g_i$ are elements in $\End_{\F_p}(\M)$. We then have that $(X,Y) \in \Aut(\mathcal{C}(\S_{q,A,B}))$ if and only if there exists some bijection $(u,v)\mapsto (w,t)$ from $\mathbb{M}^{2}\to\mathbb{M}^2$ such that
    \begin{equation} \label{eq:firstcomp}
         f_1\circ R_{u,v}^{(1)} + f_2\circ R_{u,v}^{(2)} = R^{(1)}_{w,t}\circ (g_1(x)+g_2(y), g_3(x)+g_4(y))
    \end{equation}
        and
    \begin{equation} \label{eq:secondcomp}
         f_3\circ R_{u,v}^{(1)} + f_4\circ R_{u,v}^{(2)} = R^{(2)}_{w,t}\circ (g_1(x)+g_2(y), g_3(x)+g_4(y)).
    \end{equation}
   As mentioned earlier, $X$ and $Y$ are always semilinear over $\D$ and $\E$, respectively, so that $X$ and its subfunctions $f_i$ are necessarily semilinear over $\D$, and $Y$ and its subfunctions $g_i$ are semilinear over $\E$ with same associated field isomorphism.
We will write the $f_i$ and $g_i$ as linearized polynomials, i.e., they are mappings defined by polynomials of the form $\sum_{i=0}^{m-1} a_ix^{p^i}$ with $a_i \in \M$.\\

\textbf{A roadmap of the proof of Theorem~\ref{thm:main}.}
In the end, it turns out that all automorphisms necessarily satisfy that $f_i,g_i$ are all defined by monomials of the same degree (implying that $X,Y \in \Gamma L(2,\M)$), and $X$ is \emph{diagonal}, i.e., $f_2=f_3=0$. We consider different $X,Y$ section by section:
\begin{enumerate}
    \item All $f_i,g_i$ are defined by monomials and $X$ is in diagonal form, i.e., $f_2=f_3=0$. This case is handled in Section~\ref{s:monomials}, and it is here where we will find all automorphisms.
    \item Automorphisms with $X$ in diagonal form, i.e., $f_2=f_3=0$. In this section, we proof that the $f_i,g_i$ of these automophisms are necessarily defined through monomials, reducing the case to the previous case. This will be done in Section~\ref{sec:diagonal}.
    \item The general case. Here we show that all automorphisms necessarily have $X$ in diagonal form, reducing it again to the previous case. This will be done in Section~\ref{sec:general}.
\end{enumerate}

\section{The monomial case} \label{s:monomials}
In this section, we consider $(X,Y)\in \Aut(\mathcal{C}(\S_{q,A,B}))$ such that $f_2=f_3=0$ and all other $f_i,g_i$ are monomials. By simple degree considerations, it is immediate that all monomials necessarily have the same degree. So we can set $f_1=a_1x^{p^i}$, $f_4=d_1x^{p^i}$, $g_1=a_2x^{p^i}$, $g_2=b_2x^{p^i}$, $g_3=c_2x^{p^i}$, $g_4=d_2x^{p^i}$. 

Note that his means that we consider exactly the case where $X,Y \in \Gamma L(2,\M)$.

The first proposition shows that then either $Y$ is also in diagonal form, or in anti-diagonal form.
\begin{proposition} \label{prop:monomial}
    Suppose that for some $0\le i < m$ and $a_1,d_4,a_2,b_2,c_2,d_2\in\mathbb{M}$, for all $u,v\in\mathbb{M}$, there exist some $w,t\in\mathbb{M}$ such that
    \[a_1 x^{p^i}\circ R^{(1)}_{u,v} = R^{(1)}_{w,t}\circ (a_2 x^{p^i} + b_2 y^{p^i}, c_2 x^{p^i} + d_2 y^{p^i})\]
        and
    \[d_1 x^{p^i}\circ R^{(2)}_{u,v} = R^{(2)}_{w,t}\circ (a_2 x^{p^i} + b_2 y^{p^i}, c_2 x^{p^i} + d_2 y^{p^i}).\]

    Then, either $a_2=d_2=0$ or $b_2=c_2=0$.
\end{proposition}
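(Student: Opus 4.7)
The plan is to expand both displayed identities by comparing coefficients of the monomials $\{x^\sigma, x^{q\sigma}, x^{r\sigma}, y^\sigma, y^{q\sigma}, y^{r\sigma}\}$ (writing $\sigma := p^i$), obtaining eight scalar equations $(E_1),\ldots,(E_4)$ (from the first identity) and $(E_5),\ldots,(E_8)$ (from the second). These determine $w,t$ as functions of $u,v$ and encode a web of polynomial constraints on the $g_i$-coefficients and the structure constants $A,B$.

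First, dispose of the cases where some $g_i$-coefficient vanishes. Since $\det Y \ne 0$ forces $a_2 d_2 \ne b_2 c_2$, at most one of $a_2, b_2, c_2, d_2$ can be zero. If for instance $a_2 = 0$, a direct computation of the $u^\sigma$-coefficient of $(E_5)$ produces $0 = -A a_1 c_2^{r-q} d_2^q / b_2^q$, forcing $c_2 = 0$ (impossible: $\det = 0$) or $d_2 = 0$ (the anti-diagonal case). The other single-zero cases are handled symmetrically via other coefficients of $(E_5)$ or $(E_7)$, each producing the diagonal or anti-diagonal conclusion.

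The heart of the proof is the all-nonzero case. Since $(E_1), (E_3)$ form a linear system in $(w, t)$ with determinant $B(a_2 d_2 - b_2 c_2)^q \ne 0$, one can solve for $w, t$ as linear combinations of $u^\sigma, v^\sigma$; back-substituting into the remaining coefficient equations produces product constraints, the key ones being
\begin{equation*}
a_2^r b_2^q = AB\, c_2^r d_2^q, \qquad A a_2^{qr} b_2 = B^r c_2^{qr} d_2, \qquad (c_2/a_2)^{q^2-1} = B^{1-q} = (d_2/b_2)^{q^2-1}.
\end{equation*}
Setting $\mu = d_2/b_2$ and $\zeta = b_2 c_2/(a_2 d_2)$, the last pair places $\zeta \in \F_{q^2}\cap\M = \E$ by Lemma~\ref{lem_thm}. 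Dividing corresponding pairs of the product identities above and the analogous pair derived from $(E_6), (E_8)$ yields $\zeta^{r-q} = 1 = \zeta^{qr-1}$; a short gcd computation---using $e = 2d$, the oddness of $m/e$, and the resulting $\gcd(2k, m/2) = d$---then forces $\zeta \in \D^*$.

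The contradiction, and the main obstacle to identify, is a squareness mismatch. From $\mu^{q^2-1} = B^{1-q}$ extract $\mu^{q+1} = \eta/B$ for some $\eta \in \E^*$. Since $q+1$ is even, $\mu^{q+1}$ is a square in $\M^*$, so the non-squareness of $B$ forces $\eta = B\mu^{q+1}$ to be a non-square in $\M^*$, hence in $\E^*$ (squareness transfers between $\E$ and $\M$ because the index $(p^m-1)/(p^e-1)$ is a sum of $m/e$ odd terms, hence odd). Conversely, multiplying the first two product identities displayed above, substituting $c_2 = \zeta\mu a_2$ and $d_2 = \mu b_2$, and simplifying using $\zeta^q = \zeta = \zeta^r$ (so $\zeta^{r(q+1)} = \zeta^2$), after cancelling $A$ and absorbing $B^{r+1}$ into $(B\mu^{q+1})^{r+1} = \eta^{r+1}$, yields
\begin{equation*}
\eta^{r+1} = \zeta^{-2}.
\end{equation*}
Since $\eta^q = \eta$ and $\eta^r = \eta^{p^d}$ for $\eta \in \E$ (the latter because $m/2 \equiv d \pmod{e}$, as $m/e$ is odd), this reads $N_{\E/\D}(\eta) = \eta^{1+p^d} = \zeta^{-2}$. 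The norm map in a degree-$2$ extension of finite fields sends non-squares to non-squares, so $\zeta^{-2}$ must be a non-square in $\D^*$---but $\zeta \in \D^*$ makes $\zeta^{-2}$ an obvious square. This contradiction eliminates the all-nonzero case and completes the proof.
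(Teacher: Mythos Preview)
Your argument is essentially correct, but it takes a much longer road than the paper's proof. One small slip: the claim that ``at most one of $a_2,b_2,c_2,d_2$ can be zero'' is false as stated (the diagonal and anti-diagonal configurations have two zeros and nonzero determinant); what you mean is that \emph{outside those two target configurations} at most one entry vanishes, and then your single-zero analysis correctly forces the second zero. The all-nonzero case checks out: solving $(E_1),(E_3)$ for $w,t$, back-substituting, extracting the product constraints, and pushing through the $\zeta\in\D^*$ / norm-map squareness contradiction all work as you describe.

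The paper's proof is dramatically shorter because it avoids solving for $w,t$ generically. Instead it specializes $v=0$ directly in the two $x$-coefficient equations of the \emph{second} component (your $(E_5)$ and $(E_6)$). Raising $(E_5)$ to the $r$-th power gives a homogeneous $2\times 2$ linear system in $(t^r,w^r)$; since $(u,v)\mapsto(w,t)$ is a bijection and $(0,0)\mapsto(0,0)$, any $u\neq 0$ forces $(w,t)\neq(0,0)$, so the determinant $a_2^{r^2}c_2 - A^{r+1}a_2 c_2^{r^2}$ must vanish. If $a_2,c_2$ are both nonzero this reads $A^{r+1}=(a_2/c_2)^{r^2-1}$, which is impossible since $A$ is a non-square. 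Hence $a_2c_2=0$, and the symmetric argument with $u=0$ and the $y$-coefficients gives $b_2d_2=0$; invertibility of $Y$ then finishes. So the paper exploits the freedom to specialize $(u,v)$ and the bijectivity of $(u,v)\mapsto(w,t)$ to land on a one-line determinant condition, whereas your route carries the full dependence on $(u,v)$ through several auxiliary variables ($\mu,\zeta,\eta$) and a norm computation. Your method is more systematic and would survive in settings where no single specialization kills the problem, but here it is substantial overkill.
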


\begin{proof}

By inspecting the monomials $x^{p^i}$ and $x^{rp^i}$ in Eq.~\eqref{eq:secondcomp}, we deduce that for all $u,v\in\mathbb{M}$, there exist some $w,t\in\mathbb{M}$ such that:
\[d_1 A^{p^i} v^{rp^i} = A a_2 t^r + c_2 w^r \text{ and }d_1 v^{p^i}=a_2^r t + Ac_2^r w . \]

Then, consider $v=0$. For all $u\in\mathbb{M}$ there exist some $w,t\in\mathbb{M}$ such that $a_2^{r^2} t^r + A^r c_2^{r^2} w^r = 0$ and $Aa_2 t^r + c_2 w^r = 0$. So,
\[\begin{bmatrix}
    a_2^{r^2} & A^r c_2^{r^2}\\
    A a_2 & c_2
\end{bmatrix}\begin{bmatrix}
    t^r\\w^r
\end{bmatrix}=\mathbf{0}.\]

So either $t^r = w^r = 0$ or $a_2^{r^2} c_2 - A^{r+1} a_2 c_2^{r^2} = 0$. Note that the former is impossible as we require that $(u,v)\mapsto (w,t)$ is a bijection and $(0,0)$ is clearly mapped to $(0,0)$. Now, suppose that $a_2$ and $c_2$ are both nonzero. Then, $A^{r+1}={(\frac{a_2}{c_2})}^{r^2-1}$ which is impossible since $A$ is a non-square. We conclude $a_2c_2=0$.

Similarly, inspecting the monomials $y^{p^i}$ and $y^{rp^i}$ in Eq.~\eqref{eq:secondcomp} leads to
\[d_1 u^{rp^i} = A b_2 t^r + d_2 w^r \text{ and }d_1 A^{p^i} u^{p^i}=b_2^r t + Ad_2^r w . \]
Now, setting $u=0$ leads to the same equations as before, with $b_2,d_2$ replacing $a_2,c_2$, so it leads to $b_2d_2=0$. Note that we cannot have $a_2=b_2=0$ or $c_2=d_2=0$ since  $Y$ is invertible, so either $a_2=d_2=0$ or $b_2=c_2=0$.
\end{proof}

Now, we find all automorphisms where $(X,Y) \in \Gamma L(2,\M)$ and $X,Y$ are both in diagonal form.

\begin{theorem} \label{thm:diagonalcase}
    Suppose we have $(X,Y) \in \Aut(\mathcal{C}_{\S_{q,A,B}}$, where
    \[X=\begin{bmatrix}
        a_1x^{p^i}&0\\0&d_1x^{p^i}
    \end{bmatrix} \text{ and }Y=\begin{bmatrix}
        a_2x^{p^i}&0\\0&d_2x^{p^i}
    \end{bmatrix} ,\]
    for some $0\le i < m$ and $a_1,d_1,a_2,d_2\in\M^\times$.
    Then there exist $\alpha\in\M,\delta\in\D$ such that 
    \begin{equation} \label{eq:cond_1}
        \alpha^{r-1}=A^{p^i-1}, \text{ and }\alpha^{q+1}\delta=B^{p^i-1}
    \end{equation}
    We can write $\delta=\gamma^{p^d+1}\varepsilon^2$ for $\gamma\in \E$, $\varepsilon \in \D$ in $(p^e-1)$ ways, and we get the following conditions:
    $$a_1=d_2^{q+1}\gamma^{r+1}\varepsilon/\alpha, \;\; d_1 = d_2^{r+1}\varepsilon\gamma^{r+1}/\alpha^r,\;\;a_2=d_2\gamma^r\varepsilon/\alpha,\;\;d_2 \in \M^\times.$$

    In total, there are $(p^m-1)(p^e-1)$ such automorphisms for each $i$ that satisfies the conditions in Eq.~\eqref{eq:cond_1}.
\end{theorem}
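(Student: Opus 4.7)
The plan is to expand Eqs.~\eqref{eq:firstcomp} and~\eqref{eq:secondcomp} under the diagonal monomial ansatz, compare coefficients of the distinct Frobenius monomials on the two sides to extract algebraic relations on $a_1,d_1,a_2,d_2$, and then read off $\alpha$ and $\delta$ from those relations.

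First I would substitute $f_1=a_1x^{p^i}$, $f_4=d_1x^{p^i}$, $g_1=a_2x^{p^i}$, $g_4=d_2x^{p^i}$ and $f_2=f_3=g_2=g_3=0$ into Eq.~\eqref{eq:firstcomp}. The left side becomes a linear combination of the monomials $x^{qp^i},x^{p^i},y^{qp^i},y^{p^i}$ (in the two variables $x,y$) with coefficients depending on $u,v$, and the right side is a linear combination of the same monomials with coefficients depending on $w,t$. Matching coefficients isolates $w,w^q$ as explicit functions of $u$ and $t,t^q$ as functions of $v$; requiring compatibility (that $w^q$ really is the $q$-th power of $w$, and likewise for $t$) yields the two relations $a_1^{q-1}=a_2^{q^2-1}$ and $a_1^{q-1}B^{(p^i-1)(q-1)}=d_2^{q^2-1}$. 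The same treatment of Eq.~\eqref{eq:secondcomp} (with monomials $x^{rp^i},x^{p^i},y^{rp^i},y^{p^i}$) produces $d_1^{r-1}=a_2^{r^2-1}A^{p^i-1}$ and $d_1^{r-1}A^{r(p^i-1)}=d_2^{r^2-1}$. Finally, equating the two independent expressions for $w$ coming from the two equations, and doing the same for $t$, yields the coupling relations $a_1d_2^r=a_2^qd_1A^{p^i-1}$ and $a_1a_2^rB^{p^i-1}=d_1d_2^q$. I will refer to the resulting six identities as the system $(*)$.

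From the third relation of $(*)$, $d_1/a_2^{r+1}$ is an $(r-1)$-st root of $A^{p^i-1}$, so $A^{p^i-1}$ must be an $(r-1)$-st power; fix any such root $\alpha$ and set $\delta:=B^{p^i-1}/\alpha^{q+1}$, so that $\alpha^{q+1}\delta=B^{p^i-1}$ holds by construction. The content of the theorem reduces to showing $\delta\in\D$ and then counting the solutions. Eliminating $a_1$ between the last two relations of $(*)$ gives $(d_2/a_2)^{q+r}=(AB)^{p^i-1}=A^{p^i-1}\alpha^{q+1}\delta=\alpha^{q+r}\delta$, so $\eta:=d_2/(\alpha a_2)$ satisfies $\eta^{q+r}=\delta$. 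Writing $d_1=\alpha\varepsilon'a_2^{r+1}$ for some $\varepsilon'\in\D^\times$ (permitted by the third relation of $(*)$) and substituting into the fourth forces $\eta^{r^2-1}=1$, so by Lemma~\ref{lem_thm}(ii) one has $\eta\in\F_{r^2}\cap\M=\E$. Because $e\mid k$ the $q$-Frobenius acts trivially on $\E$, and because $m/e$ is odd a direct mod-$e$ computation gives $p^{m/2}\equiv p^d\pmod{p^e-1}$, so the $Q$-Frobenius acts on $\E$ as the nontrivial element of the Galois group of $\E/\D$, namely $x\mapsto x^{p^d}$. Consequently $\eta^{q+r}=\eta^{q(Q+1)}=\eta^{Q+1}=\eta^{p^d+1}=N_{\E/\D}(\eta)$, which lies in $\D^\times$; hence $\delta\in\D^\times$ as required by Eq.~\eqref{eq:cond_1}.

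For the parameterization and count, the norm map $N_{\E/\D}\colon\E^\times\to\D^\times$ is surjective with kernel of size $p^d+1$, and squaring on $\D^\times$ is two-to-one; a short combinatorial count then shows every $\delta\in\D^\times$ admits exactly $(p^d+1)(p^d-1)=p^e-1$ factorizations $\delta=\gamma^{p^d+1}\varepsilon^2$ with $\gamma\in\E^\times$ and $\varepsilon\in\D^\times$. Each factorization picks out a $(p^d+1)$-st root of $\delta$ in $\E$, hence determines $\eta$ and therefore $a_2=d_2/(\alpha\eta)$; then $d_1$ comes from the third relation of $(*)$ and $a_1$ from the sixth. Substitution and simplification, repeatedly using $\gamma^{r^2-1}=1$, $\varepsilon^{r-1}=1$, $\alpha^{r-1}=A^{p^i-1}$, and $\gamma^r=\gamma^{p^d}$ on $\E$, produce the closed-form expressions recorded in the theorem. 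With $d_2\in\M^\times$ free, the total number of such autotopisms is $(p^m-1)(p^e-1)$.

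The main obstacle I anticipate is the final verification step: once candidate formulas for $a_1,d_1,a_2$ in terms of $(d_2,\gamma,\varepsilon,\alpha)$ are written down, one must confirm that they really satisfy all six relations of $(*)$, not merely the three used to derive them. This amounts to a careful but mechanical bookkeeping exercise in the Frobenius identities for elements of $\E$ and $\D$ together with the defining relations on $\alpha$ and $\delta$; once those identities are in hand every remaining consistency check collapses, and the proof is complete.
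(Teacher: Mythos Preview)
Your proposal is correct and follows essentially the same route as the paper: expand Eqs.~\eqref{eq:firstcomp}--\eqref{eq:secondcomp}, compare coefficients of the relevant Frobenius monomials, extract the resulting algebraic relations among $a_1,d_1,a_2,d_2$, and use them to force $A^{p^i-1}\in(\M^\times)^{r-1}$ and to parameterize the solutions. The paper specializes to $u=v=1$ and organizes the bookkeeping around the auxiliary element $\gamma\in\E$ coming from $a_1=a_2^{q+1}\gamma$, whereas you keep $u,v$ generic and organize around $\eta=d_2/(\alpha a_2)$; your use of $\eta^{q+r}=(AB)^{p^i-1}$ together with $\eta\in\E$ to place $\delta$ in $\D$ is a nice, slightly cleaner justification of that step than the paper gives, but the two arguments are equivalent and yield the same parameterization and count.
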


\begin{proof}
Assume that for $X,Y$ as above Eqs.~\eqref{eq:firstcomp} and~\eqref{eq:secondcomp} hold. Let $u=v=1$. Then, by comparing the coefficients of $x^{qp^i},x^{p^i},y^{qp^i},y^{p^i}$ in Eq.~\eqref{eq:firstcomp} and $x^{rp^i},x^{p^i},y^{rp^i},y^{p^i}$ in Eq.~\eqref{eq:secondcomp}, we get that there exist some $w,t\in\mathbb{M}$ such that the following all hold:
\begin{enumerate}
    \item[(a)] $a_1 = a_2^q w$, $a_1 = a_2 w^q$, $a_1 B^{p^i} = d_2^q Bt$ and $a_1 B^{p^i} = d_2 B t^q$, 
    \item[(b)] $d_1 = a_2^r t$, $d_1 A^{p^i}=a_2 A t^r$, $d_1 A^{p^i} = d_2^r A w$ and $d_1 = d_2 w^r$. 
\end{enumerate}
Here (a) corresponds to the conditions of Eq.~\eqref{eq:firstcomp} and (b) to the conditions derived from Eq.~\eqref{eq:secondcomp}.
Solving for $w$ and $t$ leads to $w=a_1/(a_2^q)$ and $t=d_1/(a_2^r)$, then we can eliminate $w$ and $t$ from all the other equations, leading for the equations in (a) to
\begin{align}
	a_1^{q-1} &= a_2^{q^2-1}, \label{eq:F4_l1}\\
	B^{p^i-1}&=\frac{d_2^qd_1}{a_1a_2^r}, \label{eq:F4_l2}\\	
	B^{p^i-1} &= \frac{d_2d_1^q}{a_1a_2^{Q}}.\label{eq:F4_l3}
\end{align}
From Eq.~\eqref{eq:F4_l1}, we deduce $a_1=a_2^{q+1}\gamma$ for some $\gamma \in \E$. Eqs.~\eqref{eq:F4_l2} and~\eqref{eq:F4_l3} leads to $d_1^{q-1}=a_2^{(q-1)r}d_2^{q-1}$, so $d_1=a_2^rd_2 \gamma_2$ for $\gamma_2 \in \E$. Now, plugging these into the equations from (b) above leads to

\begin{align}
	A^{p^i-1}&=\left(\frac{d_2\gamma_2}{a_2}\right)^{r-1}, \label{eq:F4_r1}\\	
	A^{p^i-1} &= \frac{(a_2^rd_2\gamma_2)^{r-1}}{a_2^{r^2-1}}=\left(\frac{d_2\gamma_2}{a_2}\right)^{r-1}\label{eq:F4_r2},\\
	a_2^{r}d_2 \gamma_2 &= d_2a_2^r\gamma^r\label{eq:F4_r3}.
\end{align}
We conclude that $\gamma_2=\gamma^r$. It remains to verify when Eqs.~\eqref{eq:F4_l2} and~\eqref{eq:F4_r1} are satisfied. Clearly,  it is necessary that $A^{p^i-1}$ is a $(r-1)$-st power, say $A^{p^i-1}=\alpha^{r-1}$. Then, from Eq.~\eqref{eq:F4_r1}, $a_2=d_2\gamma^r\varepsilon/\alpha$, where $\varepsilon \in \D$. 

Eliminating $a_1,d_1,a_2$ in Eq.~\eqref{eq:F4_l2} then leads to 
$$B^{p^i-1}=\left(\frac{\alpha}{\gamma^r \varepsilon}\right)^{q+1}\cdot\gamma^{r-1}=\frac{\alpha^{q+1}}{\gamma^{Q+1}\varepsilon^2}=\frac{\alpha^{q+1}}{\gamma^{p^d+1}\varepsilon^2}.$$
Note that if $\gamma$ and $\varepsilon$ range over $\E^\times$ and $\D^\times$, respectively, the product $\gamma^{p^d+1}\varepsilon^2$ ranges exactly over the entire of $\D^\times$ exactly $(p^e-1)$-times. Indeed, if $d \in \D^\times$ is an arbitrary non-square, we have $(p^e-1)/2$ choices for $\gamma$ such that $\gamma^{p^d+1}$ is an arbitrary non-square, and then $2$ choices for $\varepsilon$, such that $\gamma^{p^d+1}\varepsilon^2=d$. Similarly, if $d \in \D$ is a square, we have  $(p^e-1)/2$ choices for $\gamma$ such that $\gamma^{p^d+1}$ is an arbitrary square, and then again $2$ choices for $\varepsilon$. 

The conditions thus exactly yield the conditions stated in the theorem. It is elementary to check that each of these choices do indeed yield automorphisms.

\end{proof}

We now check the case $(X,Y) \in \Gamma L (2,\M)$, where $X$ is in diagonal form and $Y$ is in antidiagonal form. Together with Proposition~\ref{prop:monomial} and Theorem~\ref{thm:diagonalcase}, this concludes the case where the subfunctions of $X$ and $Y$ are monomials.

\begin{theorem}\label{thm:antidiagonalcase}
      Suppose we have $(X,Y) \in \Aut(\mathcal{C}_{\S_{q,A,B}}$, where
    \[X=\begin{bmatrix}
        a_1x^{p^i}&0\\0&d_1x^{p^i}
    \end{bmatrix} \text{ and }Y=\begin{bmatrix}
        0&b_2x^{p^i}\\c_2x^{p^i}&0
    \end{bmatrix} ,\]
    for some $0\le i < m$ and $a_1,d_1,b_2,c_2\in\M^\times$.
    Then there exist $\alpha\in\M,\delta\in\D$ such that 
    \begin{equation} \label{eq:cond_2}
     \alpha^{r-1}=A^{p^i+1}, \text{ and }\alpha^{q+1}\delta=B^{p^i+1}.
     \end{equation}
     We can write $\delta=\gamma^{p^d+1}\varepsilon^2$ for $\gamma\in \E$, $\varepsilon \in \D$ in $(p^e-1)$ ways, and we get the following conditions:

    In total, there are $(p^m-1)(p^e-1)$ such automorphisms for each $i$ that satisfies the conditions in Eq.~\eqref{eq:cond_2}.
\end{theorem}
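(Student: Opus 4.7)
My plan is to mirror the proof of Theorem~\ref{thm:diagonalcase}, with the antidiagonal form of $Y$ swapping the roles of $w$ and $t$ on the right-hand sides of Eqs.~\eqref{eq:firstcomp}--\eqref{eq:secondcomp}. I would first substitute $u=v=1$, use $Y(x,y) = (b_2 y^{p^i}, c_2 x^{p^i})$, and compare coefficients of $x^{p^i}, x^{qp^i}, y^{p^i}, y^{qp^i}$ in \eqref{eq:firstcomp} and of $x^{p^i}, x^{rp^i}, y^{p^i}, y^{rp^i}$ in \eqref{eq:secondcomp}. This will yield the eight relations $a_1 = Bc_2^q t$, $a_1 = Bc_2 t^q$, $a_1 B^{p^i} = b_2^q w$, $a_1 B^{p^i} = b_2 w^q$ from the first, and $d_1 = Ac_2^r w$, $d_1 A^{p^i} = c_2 w^r$, $d_1 A^{p^i} = b_2^r t$, $d_1 = Ab_2 t^r$ from the second. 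The key structural difference from the diagonal case is that the constants $B$ and $A$ now appear on \emph{both} sides of the $q$- and $r$-equations respectively, which is precisely what produces the exponent $p^i+1$ in \eqref{eq:cond_2} in place of $p^i-1$ in \eqref{eq:cond_1}.

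\textbf{Eliminating $w,t$ and extracting conditions.} Next, I would solve the first equation of each pair for $w$ or $t$ and substitute into the second. This gives $a_1 = Bc_2^{q+1}\gamma_1$ and $a_1B^{p^i} = b_2^{q+1}\gamma_2$ with $\gamma_1, \gamma_2 \in \E$, hence $B^{p^i+1}(c_2/b_2)^{q+1} \in \E$. The two $r$-equations involving $w$ (and the two involving $t$) force $A^{r+p^i}$ to be an $(r-1)$-st power; since $r \equiv 1 \pmod{r-1}$ in the exponent, this is equivalent to $A^{p^i+1}$ being an $(r-1)$-st power, so I set $\alpha^{r-1} = A^{p^i+1}$. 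The same equations then give $d_1 = A\alpha c_2^{r+1}\eta_1$ and $b_2^{r+1}/d_1 = A^{p^i}\alpha \eta_2$ with $\eta_1, \eta_2 \in \D$. Cross-equating the two independent expressions for $w$ and for $t$ across the two groups will produce compatibility identities forcing $\mu := b_2/(\alpha c_2) \in \E$ and $\gamma_1 = \gamma_2^r$. Substituting back into the expression for $B^{p^i+1}$, using $\mu^q = \mu$ (because $2d = e \mid k$) and $\gamma_2^{1+p^{m/2}} = N_{\E/\D}(\gamma_2) \in \D$ (which follows from $m/(2d)$ being odd, so $m/2 \equiv d \pmod{2d}$), the expression collapses to $\delta := B^{p^i+1}/\alpha^{q+1} \in \D$, establishing \eqref{eq:cond_2}.

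\textbf{Parametrization, count, and main obstacle.} With \eqref{eq:cond_2} in hand, I would parametrize $\delta = \gamma^{p^d+1}\varepsilon^2$ with $\gamma \in \E, \varepsilon \in \D$; the surjectivity-and-fiber-counting argument from the proof of Theorem~\ref{thm:diagonalcase} applies verbatim and yields $(p^e-1)$ representations per admissible $\delta$. Back-substitution then expresses $a_1, d_1, b_2$ uniquely in terms of a free parameter $c_2 \in \M^\times$, giving $(p^m-1)(p^e-1)$ automorphisms per admissible $i$. A direct substitution into \eqref{eq:firstcomp}--\eqref{eq:secondcomp} will confirm that each resulting tuple genuinely defines an automorphism. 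The main obstacle will be the careful bookkeeping of the swapped roles of $w$ and $t$ and the arithmetic of the exponents modulo $r-1$, $q-1$, and $p^{2d}-1$, together with the Galois-theoretic reduction (via Lemma~\ref{lem_thm}) of the $(q+r)$-exponent on elements of $\E$ to the $\E/\D$-norm; once these reductions are in place, the structural argument translates directly from the diagonal case.
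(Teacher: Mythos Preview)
Your proposal is correct and follows essentially the same approach as the paper: compare coefficients at $u=v=1$ to obtain the eight relations in (a) and (b), eliminate $w$ and $t$, extract the condition $\alpha^{r-1}=A^{p^i+1}$ from the $r$-pair and the condition $B^{p^i+1}=\alpha^{q+1}\delta$ with $\delta\in\D$ from the remaining equations, then parametrize $\delta=\gamma^{p^d+1}\varepsilon^2$ and count exactly as in Theorem~\ref{thm:diagonalcase}. The only organizational difference is that the paper solves $t=a_1/(Bc_2^q)$ and $w=d_1/(Ac_2^r)$ from one equation each at the outset and substitutes directly, arriving at $a_1=Bc_2^{q+1}/\gamma$, $d_1=c_2^{r+1}\alpha A\varepsilon$, and $b_2=c_2\alpha\varepsilon\gamma^r$ without your intermediate variables $\gamma_1,\gamma_2,\eta_1,\eta_2,\mu$; your cross-equating produces the same constraints by a slightly longer route.
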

\begin{proof}
    Just like in the proof of Theorem~\ref{thm:diagonalcase}, we check the coefficients in Eqs.~\eqref{eq:firstcomp} and~\eqref{eq:secondcomp} of all monomials for $u=v=1$. We obtain that there exist some $w,t\in\mathbb{M}$ such that the following all hold:
\begin{enumerate}
    \item[(a)] $a_1=Bc_2^qt$, $a_1=B_2t^q$, $a_1B^{p^i}=b_2^qw$, $a_1B^{p^i}=b_2w^q$, 
    \item[(b)] $d_1 = Ac_2^rw$, $d_1 A^{p^i}=c_2w^r$, $d_1 A^{p^i} = b_2^rt$ and $d_1 = Ab_2 t^r$. 
\end{enumerate}
This yields from the first equation in (a) and (b) that $t=a_1/(Bc_2^q)$ and $w=d_1/(Ac_2^r)$ and we can eliminate $t,w$ from the equations above. For the second equation from (a) this leads to $a_1^{q-1}= B^{q-1}c_2^{q^2-1}$, so $$a_1=\frac{Bc_2^{q+1}}{\gamma}$$ for some $\gamma \in \E$. The second equation in (b) similarly leads to $d_1^{r-1}=c_2^{r^2-1}A^{p^i+r}$. This implies that $A^{p^i+1}$ is a $(r-1)$-th power, say $A^{p^i+1}=\alpha^{r-1}$. Then $$d_1=c_2^{r+1}\alpha A \varepsilon$$ for some $\varepsilon \in \D.$ Eliminating $t$ and $d_1$ from the last equation in (b) then leads to $$b_2=c_2\alpha \varepsilon\gamma^r.$$
With this, $c_2$ determines $a_1,d_1,b_2$ up to choices for $\varepsilon,\gamma$. We only have to check if any of the other equations in (a) and (b) yield to additional constraints. Both the third and the fourth condition in (a) leads then to 
$$B^{p^i+1}=\alpha^{q+1}\varepsilon^2\gamma^{p^d+1}.$$
As in Theorem~\ref{thm:diagonalcase}, the expression $\varepsilon^2\gamma^{p^d+1}$ ranges over all elements  $\delta \in \D$ exactly $(p^e-1)$ times. So there necessarily exists a $\delta \in \D$ such that $B^{p^i+1}=\alpha^{q+1} \delta.$ The last remaining condition, the third in (b) leads to no additional constraints and the result follows.
\end{proof}

\section{The diagonal case} \label{sec:diagonal}
We now consider the case of automorphisms $(X,Y) \in \Aut(\mathcal{C}(\S_{q,A,B}))$ where $X$ is in diagonal form. The goal in this section is to prove that for these automorphisms, the subfunctions are necessarily monomials, reducing it to the monomial case already considered in Section~\ref{s:monomials}. We start with some lemmas.
 \begin{lemma} \label{lem:binomial}
     Let $f \colon \M \rightarrow \M$ be defined by $f(x)=x u^q + x^q u$ for some $u \in \mathbb{M}^\times$. Then $f$ is invertible.
 \end{lemma}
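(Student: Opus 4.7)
The plan is to exploit the fact that $f$ is an $\mathbb{F}_p$-linear endomorphism of the finite-dimensional $\mathbb{F}_p$-vector space $\mathbb{M}$, so invertibility is equivalent to injectivity, which in turn reduces to showing $\ker f = \{0\}$.

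First I would note that $f$ is indeed $\mathbb{F}_p$-linear, as it is the sum of the $\mathbb{F}_p$-linear scaling map $x \mapsto xu^q$ and the $\mathbb{F}_p$-linear Frobenius-times-constant map $x \mapsto x^q u$. Then I would suppose for contradiction that there exists a nonzero $x \in \mathbb{M}$ with $f(x) = 0$, i.e., $x u^q + x^q u = 0$. Since $u \neq 0$ and $x \neq 0$, I can rearrange this to $(x/u)^{q-1} = -1$, exhibiting $-1$ as a $(q-1)$-st power in $\mathbb{M}^\times$.

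This contradicts Lemma~\ref{lem_thm}(i), which states precisely that $-1 \notin (\mathbb{M}^\times)^{q-1}$. Hence $\ker f = \{0\}$, and because $\mathbb{M}$ is finite-dimensional over $\mathbb{F}_p$, the $\mathbb{F}_p$-linear map $f$ is automatically bijective. There is no real obstacle here; the only subtle point is remembering that Lemma~\ref{lem_thm}(i) applies because the hypotheses of Notation~\ref{notation} (in particular $m/\gcd(k,m)$ odd and $p$ odd) are in force, which is exactly the setup that guarantees $-1$ fails to be a $(q-1)$-st power.
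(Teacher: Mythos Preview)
Your proof is correct and follows essentially the same approach as the paper: reduce invertibility to triviality of the kernel via linearity, then rearrange $xu^q + x^qu = 0$ to exhibit $-1$ as a $(q-1)$-st power, contradicting Lemma~\ref{lem_thm}(i). The only cosmetic difference is that the paper notes $f$ is $\E$-linear (rather than merely $\F_p$-linear) and writes the rearranged equation as $x^{q-1} = -u^{q-1}$ instead of $(x/u)^{q-1} = -1$.
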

 \begin{proof}
    Since $f$ is linear over $\E$, it is enough to check that its kernel is trivial. Now $f(x)=xu^q+x^qu=0$ simplifies to $x^{q-1}=-u^{q-1}$ for nonzero $x$, and the result follows since $-1$ is not a $(q-1)$-st power by Lemma~\ref{lem_thm} (i). 
\end{proof}


\begin{lemma}\label{lemma:bijection}
   Let $f,g\colon \M \rightarrow \M$ such that for each $w\in\mathbb{M}$, there is an $u_w\in\mathbb{M}$ such that
    \[f\circ (x^q u_w + x u_w^q) = (x^q w + x w^q)\circ g.\]
    Then $w\mapsto u_w$ is a bijection or $g=0$.
\end{lemma}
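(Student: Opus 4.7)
The plan is to prove the contrapositive-style dichotomy: assume $g \neq 0$ and show that $w \mapsto u_w$ is well-defined and injective on the finite set $\M$, hence bijective. The two key facts I would exploit are essentially already in hand. By Lemma~\ref{lem:binomial}, the map $h_u \colon x \mapsto x^q u + xu^q$ is invertible whenever $u \neq 0$. Moreover, the assignment $u \mapsto h_u$ is additive (in fact $\F_p$-linear), since $(u_1+u_2)^q = u_1^q + u_2^q$ in characteristic $p$ gives $h_{u_1+u_2} = h_{u_1} + h_{u_2}$.

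First I would establish that, under the assumption $g \neq 0$, $u_w$ is uniquely determined by $w$. If two values $u_w, u_w'$ both satisfy the defining equation, subtracting the two instances yields $f \circ h_{u_w - u_w'} = 0$. If $u_w \neq u_w'$, then $h_{u_w - u_w'}$ is invertible by Lemma~\ref{lem:binomial}, forcing $f = 0$; but then the defining equation reduces to $h_w \circ g = 0$ for every $w$, and taking any $w \neq 0$ (so that $h_w$ is invertible) contradicts $g \neq 0$. Thus $w \mapsto u_w$ is a genuine function.

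Next I would prove injectivity of this function. Suppose $u_{w_1} = u_{w_2}$. The left-hand sides of the two defining equations then agree, so $h_{w_1} \circ g = h_{w_2} \circ g$, that is, $h_{w_1 - w_2} \circ g = 0$ using the additivity of $u \mapsto h_u$. If $w_1 \neq w_2$, then $h_{w_1-w_2}$ is invertible, forcing $g = 0$, contradicting our standing assumption. Hence $w_1 = w_2$, so $w \mapsto u_w$ is injective, and therefore bijective on the finite set $\M$.

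I do not anticipate any real obstacle here; the argument is purely formal once one notes the $\F_p$-linearity of $u \mapsto h_u$ and invokes Lemma~\ref{lem:binomial} in two distinct places. The only point requiring care is the logical order — establishing well-definedness before injectivity — and cleanly maintaining the dichotomy "$g = 0$ or bijection" by reducing every alternative possibility to the conclusion $g = 0$.
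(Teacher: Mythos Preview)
Your proof is correct and follows essentially the same approach as the paper: assume $u_{w_0}=u_{w_1}$, subtract to obtain $h_{w_0-w_1}\circ g=0$, and invoke Lemma~\ref{lem:binomial} to conclude $w_0=w_1$ or $g=0$. The only difference is that you additionally verify that $u_w$ is uniquely determined by $w$ when $g\neq 0$, a point of rigor the paper silently passes over by treating $u_w$ as an already-made choice; this extra step is harmless and arguably cleaner.
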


\begin{proof}
Suppose that $u=u_{w_0}=u_{w_1}$. Then,
\[f\circ (x^q u + xu^q)=(x^q w_0 + x w_0^q)\circ g=(x^q w_1 + x w_1^q)\circ g\]

So
\[(x^q (w_0 - w_1) + x{(w_0 - w_1)}^q)\circ g= 0\]

Which means that either $w_0=w_1$ or $g=0$ by Lemma~\ref{lem:binomial}. So, either $g=0$ or $w\mapsto u_w$ is a bijection. 
\end{proof}
\begin{lemma}\label{lemma:qnormal}
  Let $f,g\colon \M \rightarrow \M$ such that $g\ne 0$ and
  \begin{equation} \label{eq:normalize1}
       f\circ (x^q u_w + x u_w^q) = (x^q w + x w^q)\circ g
  \end{equation}

    for all $w\in\mathbb{M}$ and some bijection $w \mapsto u_w$ of $\M$. Then $f$ and $g$ are semilinear over $\E$ with the same associated field automorphism.
\end{lemma}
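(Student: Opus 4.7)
The strategy is to treat Eq.~(\ref{eq:normalize1}) as a polynomial identity in two variables and to extract rigidity on the linearized-polynomial representations of $f$ and $g$ from the sparseness of $T_w \circ g$: for each fixed $y$, the right-hand side is $g(y)^q w + g(y) w^q$, so its $w$-support as a linearized polynomial is contained in $\{1, q\}$, a severe constraint on the left-hand side.

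First I would show that $u_w = L(w)$ for some $L \in \End_{\F_p}(\M)$. Replacing $w$ by $w_1 + w_2$ in Eq.~(\ref{eq:normalize1}) and using additivity of $T_\bullet$ in its subscript gives $f \circ T_{u_{w_1+w_2}} = f \circ T_{u_{w_1}+u_{w_2}}$. To cancel $f$, I would verify $f$ is injective: if $f(z_0) = 0$ for some $z_0 \ne 0$, then for each $u \in \M^\times$ the unique $y_u$ with $T_u(y_u) = z_0$ (well-defined by Lemma~\ref{lem:binomial}) yields $T_w(g(y_u)) = 0$ for the corresponding $w$, so $g(y_u) = 0$; the map $u \mapsto y_u$ being injective forces $g$ to vanish on $|\M^\times|$-many inputs, hence $g=0$, contradicting the hypothesis. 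After cancelling $f$ and invoking injectivity of $T_\bullet$ in its subscript (from Lemma~\ref{lem_thm}(i), as in Lemma~\ref{lem:binomial}), additivity of $w \mapsto u_w$ follows.

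Next I would write $f(x) = \sum a_i x^{p^i}$, $g(x) = \sum b_j x^{p^j}$, $L(w) = \sum c_s w^{p^s}$, and equate the coefficients of $y^{p^n} w^{p^\ell}$ on both sides of Eq.~(\ref{eq:normalize1}). A direct computation gives, for all $n, \ell \in \Z/m$,
\[a_{n-k}\, c_{\ell-n+k}^{p^{n-k}} + a_n\, c_{\ell-n-k}^{p^{n+k}} = b_{n-k}^q\,[\ell=0] + b_n\,[\ell=k].\]
For $\ell \notin \{0, k\}$ these are homogeneous relations that couple $a_n$ with $a_{n-k}$ through the $c_s$; crucially, the coupling occurs only within a single coset of $\langle k \rangle \leq \Z/m$. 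For $\ell \in \{0, k\}$ the boundary relations express each $b_n$ as a specific combination of $a$- and $c$-coefficients.

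Finally, I would argue that the support $I := \{i : a_i \ne 0\}$ lies in a single coset of $\langle k \rangle$ in $\Z/m$, which is exactly a single residue class modulo $e = \gcd(k, m)$; by the standard characterization of linearized polynomials, this is the condition for $f$ to be $\E$-semilinear with associated Frobenius $\alpha \mapsto \alpha^{p^{i_0}}$ for any $i_0 \in I$. Assuming towards a contradiction that $I$ meets two distinct cosets, the homogeneous relations force either the support of $c$ to collapse to at most two indices (from which a second cross-coset contributor gives disjoint support requirements, hence $L = 0$, contradicting its bijectivity) or ``cycle-sum'' conditions on $a$ inside each coset that, combined with the boundary equations, either give $g = 0$ or factor $f$ in a way that makes it non-injective (contradicting step one). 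The parallel analysis of the boundary equations then places the support of $b$ in the same coset as $I$, yielding the common-automorphism conclusion. The main obstacle is this final case analysis: carefully interleaving the homogeneous and boundary relations across cosets, with the parity hypothesis that $m/e$ is odd (Notation~\ref{notation}) together with $-1 \notin (\M^\times)^{q-1}$ (Lemma~\ref{lem_thm}(i)) providing the key obstruction that closes off multi-coset configurations.
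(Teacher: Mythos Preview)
Your first two steps are sound. The injectivity argument for $f$ and the additivity of $w\mapsto u_w$ are correct, and the coefficient identity
\[
a_{n-k}\, c_{\ell-n+k}^{\,p^{n-k}} + a_n\, c_{\ell-n-k}^{\,p^{n+k}} = b_{n-k}^{\,q}\,[\ell=0] + b_n\,[\ell=k]
\]
is exactly right. You have also correctly observed that these relations decouple along cosets of $\langle k\rangle=e\Z/m\Z$, which is the same decomposition the paper uses.

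The gap is in your third step. What you wrote there is a plan, not an argument: phrases like ``the support of $c$ collapses to at most two indices'' and ``cycle-sum conditions \ldots\ factor $f$ in a way that makes it non-injective'' are not justified, and it is not at all clear they can be. Concretely, for two adjacent nonzero indices $a_{n-k},a_n$ in one coset, the homogeneous relations (varying $\ell\notin\{0,k\}$) give $c_{s+2k}=C\cdot c_s^{\,p^{2k}}$ for all but two values of $s$, with $C$ depending on $n$. Since $\gcd(2k,m)=e$, this is a recursion inside each coset of $e\Z$, and there are $e$ such cosets and two exceptional $s$'s. Showing that a second nonzero coset of $a$-coefficients forces an incompatible value of $C$ (or forces $L=0$) is a genuine case analysis you have not done, and the parity condition $m/e$ odd together with Lemma~\ref{lem_thm}(i) does not obviously close it. As written, this step is a hope rather than a proof.

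The paper avoids this entirely. Having made the same coset decomposition $f=\sum_{i=0}^{e-1}x^{p^i}\circ f_i$, $g=\sum_{i=0}^{e-1}x^{p^i}\circ g_i$ with $f_i,g_i$ $\E$-linear, it restricts to $w\in\E$, where $x^qw+xw^q=w(x^q+x)$. Pushing the scalar $w$ through $x^{p^i}\circ g_i$ and using the decoupled equation at $w=1$ yields $f_i\circ T_{u_w-u_1 w^{p^{-i}}}=0$; since $T_a$ is invertible for $a\neq 0$ (Lemma~\ref{lem:binomial}), either $f_i=0$ or $u_w=u_1 w^{p^{-i}}$ for all $w\in\E$. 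Two distinct nonzero $f_i,f_j$ would then force $w^{p^{-i}}=w^{p^{-j}}$ on $\E$, i.e.\ $i\equiv j\pmod e$, impossible for $0\le i,j<e$. No linearity of $w\mapsto u_w$, no coefficient chase, no case analysis on the support of $L$ is needed. If you want to salvage your approach, the clean move is to graft this $w\in\E$ restriction onto your coset picture and discard the proposed step~3.
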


\begin{proof}
We can write any function on $\M$ in the following way:
Define $f_i$ for $0\le i \le e-1$ such that all $f_i$ are $\E$-linear and 
\[f=\sum_{i=0}^{e-1} x^{p^i} \circ f_i.\]

Similarly, let $g_i$ for $0\le i\le e-1$ such that all $g_i$ are $\E$-linear and
\[g=\sum_{i=0}^{e-1} x^{p^i} \circ g_i.\]

 Then, Eq.~\eqref{eq:normalize1} can be rewritten as
\[x^{p^i}\circ f_i \circ (x^q u_w + x u_w^q) = (x^q w + x w^q)\circ x^{p^i}\circ g_i.\]
 for each $i$ and for all $w\in\mathbb{M}$.
 
For $w\in\mathbb{E}$,
\[(x^q w + x w^q)\circ x^{p^i}\circ g_i = (x^q + x)\circ x^{p^i}\circ g_i\circ w^{p^{-i}}x.\]

So,
\[(x^q w + x w^q)\circ x^{p^i}\circ g_i = x^{p^i}\circ f_i\circ (x^q u_1 + xu_1^q)\circ w^{p^{-i}}x.\]

Then, as $w^{p^{-i}}\in\E$,
\[(x^q w + x w^q)\circ x^{p^i}\circ g_i = x^{p^i}\circ f_i\circ (x^q (u_1 w^{p^{-i}}) + x{(u_1 w^{p^{-i}})}^q).\]

So
\[x^{p^i}\circ f_i\circ (x^q (u_1 w^{p^{-i}}) + x{(u_1 w^{p^{-i}})}^q) = x^{p^i}\circ f_i \circ (x^q u_w + x u_w^q)\]
and we conclude
\[x^{p^i}\circ f_i\circ (x^q (u_1 w^{p^{-i}} - u_w) + x{(u_1 w^{p^{-i}} - u_w)}^q) = 0.\]

Note that either $w^{p^{-i}} u_1 = u_w$ or $f_i=0$ due to invertibility. Next, suppose that for some $i,j$, we have $f_i\ne 0$ and $f_j\ne 0$. Then, $u_w =w^{p^{-i}} u_1=w^{p^{-j}} u_1$ for all $w\in\mathbb{E}$. So, $w^{p^i-p^j}=1$ for all $w\in\mathbb{E}$, meaning that $i\equiv j\pmod{e}$. However, we also know that $0\le i,j\le e-1$, so $i=j$ must be true.

This means that there exists at most one $i$ such that $f_i\ne 0$. Also, for all $i$, if $f_i=0$ then $g_i=0$. So clearly there exists some $i$ such that $g=g_i$ and $f=f_i$, so $f,g$ are semilinear over $\E$ with the same associated automorphism. 
\end{proof}

\begin{lemma}\label{lemma:rseparation}
   Let $f \colon \M \rightarrow \M$ be defined by  $f(x)=x^r v + Ax v^r$ for some $v \in \M^\times$. Then $f$ is bijective.
\end{lemma}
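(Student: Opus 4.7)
The plan is to mimic the proof strategy of Lemma~\ref{lem:binomial}: since $f(x) = x^r v + A x v^r$ is $\F_p$-linear (being a linearized polynomial), bijectivity is equivalent to triviality of the kernel. Suppose $f(x) = 0$ with $x \neq 0$. Rearranging, $x^r v = -A x v^r$, and dividing by $x v$ (using $v \in \M^\times$) gives
\[
\left(\frac{x}{v}\right)^{r-1} = -A.
\]
So it suffices to derive a contradiction from the assumption that $-A$ is an $(r-1)$-st power in $\M^\times$.

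The key observation is that \emph{every} $(r-1)$-st power in $\M^\times$ is a square. Indeed, $r = p^{k+m/2}$ is a power of the odd prime $p$, so $r$ is odd and $r-1$ is even; hence for any $z \in \M^\times$ one can write $z^{r-1} = (z^{(r-1)/2})^2$. Therefore, if $-A$ were an $(r-1)$-st power, $-A$ would be a square in $\M^\times$. I will then contradict this by showing $-A$ is a non-square. Recall from the paragraph after Theorem~\ref{thm:SF} that $A$ is a non-square in $\M$. Moreover, $-1$ is a square in $\M^\times$: since $m$ is even and $p$ is odd we have $p^m \equiv 1 \pmod 4$ (split into the cases $p \equiv 1, 3 \pmod 4$ and use that $m$ is even), so $4 \mid p^m - 1$, i.e., the cyclic group $\M^\times$ contains an element of order $4$, whose square is $-1$. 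Thus $-A = (-1) \cdot A$ is the product of a square and a non-square, hence a non-square, contradicting the conclusion above.

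There is no real obstacle here; the argument is a short squares-versus-non-squares computation in $\M^\times$ exploiting the facts that $p$ is odd and $m$ is even (Notation~\ref{notation}), together with the assumption that $A$ is a non-square. Unlike Lemma~\ref{lem:binomial}, where one invokes Lemma~\ref{lem_thm}(i) to show $-1$ is not a $(q-1)$-st power, here the analogous statement about $(r-1)$-st powers is the much easier fact that all such powers are squares, which makes the non-squareness of $-A$ sufficient.
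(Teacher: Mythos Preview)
Your proof is correct and takes essentially the same approach as the paper: both reduce to showing that $-A$ (equivalently, $A$) cannot equal an $(r-1)$-st power times $\pm 1$, using that $r-1$ is even and $-1$ is a square in $\M$ while $A$ is not. The only difference is cosmetic phrasing---the paper writes $A=-(x/v)^{r-1}$ and argues the right-hand side is a square, whereas you write $(x/v)^{r-1}=-A$ and argue the left-hand side is a square while the right is not.
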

\begin{proof}
    $f$ is bijective if and only if its kernel is trivial. We have $f(x)=x^r v + Ax v^r=0$ for nonzero $x \in \M$ if and only if $A=-(x/v)^{r-1}$. But $A$ is a non-square in $\M$, and both $-1$ and $(x/v)^{r-1}$ are squares in $\M$, leading to a contradiction.
\end{proof}

\begin{lemma}\label{lemma:qelim}
     Let $f \colon \M \rightarrow \M$ be semilinear over $\D$. Suppose for all $u \in \M^\times$ we have that $f\circ (x^r u + Ax u^r)$ is $\E$-linear. Then, $f=0$.
\end{lemma}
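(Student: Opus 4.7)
The plan is to expand $f$ as a linearized polynomial $f(x) = \sum_{i=0}^{m-1} a_i x^{p^i}$ and exploit the hypothesis one coefficient at a time. A direct computation in characteristic $p$ gives
\[ f(x^r u + A x u^r) \;=\; \sum_{i=0}^{m-1} a_i u^{p^i} x^{rp^i} \;+\; \sum_{i=0}^{m-1} a_i A^{p^i} u^{rp^i} x^{p^i}, \]
so, reading indices modulo $m$, the coefficient of $x^{p^j}$ equals
\[ a_{j-k-m/2}\, u^{p^{j-k-m/2}} \;+\; a_j A^{p^j}\, u^{rp^j}. \]
A linearized polynomial $\sum_j b_j x^{p^j}$ on $\M$ is $\E$-linear precisely when $b_j = 0$ for every $j$ with $e \nmid j$. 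The hypothesis therefore forces the displayed coefficient to vanish identically in $u \in \M$ for every $j$ with $e \nmid j$.

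The next step is to check that the two monomials $u^{p^{j-k-m/2}}$ and $u^{rp^j} = u^{p^{k+m/2+j}}$ are distinct as functions on $\M$, i.e.\ that their exponents differ mod $m$. Their difference is $2k \pmod m$, and $m \nmid 2k$: otherwise, writing $k = ek'$ with $\gcd(k', m/e) = 1$ and using that $m/e$ is odd, one gets $(m/e) \mid 2k'$, hence $(m/e) \mid k'$, forcing $m/e = 1$ and $m \mid k$, which contradicts $1 \le k \le m-1$. Since the two monomials are distinct and $A \neq 0$, the vanishing of the coefficient as a polynomial identity in $u$ forces both $a_{j-k-m/2} = 0$ and $a_j = 0$ for every $j$ with $e \nmid j$.

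To finish, the two constraints on the $a_j$ should be combined. The second says $a_j \neq 0 \Rightarrow e \mid j$. The first, after the reindexing $j' = j - k - m/2$, says $a_{j'} \neq 0 \Rightarrow e \mid j' + k + m/2$. Since $e \mid k$ and $m/2 \equiv d \pmod e$ (because $m = et$ with $t$ odd gives $m/2 = dt \equiv d \pmod{2d}$), we have $k + m/2 \equiv d \pmod e$, so the first constraint reads $j' \equiv -d \equiv d \pmod e$. Together with the second, a nonzero $a_j$ would need $j \equiv 0$ and $j \equiv d$ modulo $e = 2d$ simultaneously, which is impossible since $d \neq 0$. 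Hence every $a_j$ vanishes, i.e.\ $f = 0$. The only real obstacle is the bookkeeping of indices modulo $m$ together with the brief arithmetic identifying $k + m/2 \pmod e$ and ruling out $m \mid 2k$; notably, the $\D$-semilinearity hypothesis on $f$ is not actually needed for the argument.
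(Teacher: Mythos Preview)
Your argument is correct and in fact proves a stronger statement than the lemma: the $\D$-semilinearity hypothesis on $f$ is never used, as you observe at the end. The bookkeeping checks out; in particular, your verification that $m\nmid 2k$ and that $k+m/2\equiv d\pmod e$ are both clean and accurate, and the final incompatibility $j\equiv 0$ versus $j\equiv d$ modulo $e=2d$ is exactly the contradiction needed.

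The paper takes a more structural route. It first uses the decomposition $f=\sum_{i=0}^{d-1} f_i\circ x^{p^i}$ with each $f_i$ $\D$-linear, observes that for $i\neq 0$ the exponents appearing in $f_i\circ x^{p^i}\circ(x^ru+Axu^r)$ are all $\equiv i\pmod d$ and hence never $\equiv 0\pmod e$, and kills those pieces via the bijectivity of $x\mapsto x^ru+Axu^r$ (Lemma~\ref{lemma:rseparation}). It then writes the surviving $\D$-linear part as $g+h\circ x^r$ with $g,h$ $\E$-linear and varies $u$ to force $h=0$ and then $g=0$. Your approach bypasses this two-step reduction by reading off the coefficient of each $x^{p^j}$ directly and exploiting that the two resulting monomials in $u$ are distinct. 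The payoff is a shorter, self-contained argument that does not invoke Lemma~\ref{lemma:rseparation} and does not need the semilinearity assumption; the paper's approach, on the other hand, makes the role of the field tower $\D\subset\E$ more transparent.
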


\begin{proof}

Let $f=\sum_{i=0}^{d-1} f_i\circ x^{p^i}$ for $\D$-linear $f_i$. Then, for all $i\ne 0$,
\[f_i\circ x^{p^i}\circ (x^r u + Ax u^r)=0\]

since this part of $f\circ (x^r u + Ax u^r)$ has no powers in common with any $\E$-linear function. So, $f_i=0$ for all $i \neq 0$ by Lemma~\ref{lemma:rseparation}, so $f$ is $\D$-linear. Let $f=g + h\circ x^r$ for $\E$-linear $g,h$ (recall that $\E$ is a degree $2$ extension of $\D$). Then,
\[g\circ x^r u + h \circ A^r x^r u^{r^2} = 0\]

for all $u\in\mathbb{M}$ by the same logic as above. Then, for $u\in\mathbb{M}^\times$, we can simplify this to
\[g + h\circ A^r u^{r^2-1}x = 0.\]

If $h\ne 0$ then $g+h\circ A^r u^{r^2-1}x$ is not constant, so $h=0$ must be true, and then clearly also $g=f=0$.
\end{proof}

\begin{lemma}\label{lemma:rmonomial}
    Let $f,g \colon \M \rightarrow \M$ such that $f$ is semilinear over $\D$ and $g$ is semilinear over $\E$ and and let $w\mapsto v_w$ be a bijection on $\M$ such that
    \[f\circ (x^r v_w + c_1x v_w^r) = (x^r w + c_2x w^r)\circ g\]
    holds for all $w\in\mathbb{M}$, where $c_1,c_2 \in \M^\times$. Then, $f$ is a  monomial.
\end{lemma}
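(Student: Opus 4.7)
My plan is to first absorb the common Frobenius into $f$ and $g$, then reduce the problem to a coefficient comparison on linearized polynomials, and finally pair up equations from the resulting system to force all but one coefficient of $f$ to vanish. Since $f$ and $g$ share the associated field automorphism $(\cdot)^{p^{j_0}}$, I write $f = F \circ x^{p^{j_0}}$ and $g = G \circ x^{p^{j_0}}$ with $F$ being $\D$-linear and $G$ being $\E$-linear. Substituting $y = x^{p^{j_0}}$ gives a functional equation of exactly the same shape with $(F, G, c_1^{p^{j_0}}, v_w^{p^{j_0}})$ in place of $(f, g, c_1, v_w)$, so I may assume outright that $f$ is $\D$-linear and $g$ is $\E$-linear; $f$ is a monomial iff $F$ is.

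Write $f = \sum_{\ell=0}^{n-1} a_\ell x^{p^{\ell d}}$ with $n = m/d$, and $g = \sum_{j=0}^{n/2-1} b_j x^{p^{2jd}}$, and put $r = p^{sd}$ with $s = (k+m/2)/d$. The two numerical facts underpinning the argument are: (i) $s \bmod n$ is odd, because $k/d$ is even (as $e = 2d$ divides $k$) while $m/(2d) = m/e$ is odd by hypothesis; and (ii) $2s, 4s \not\equiv 0 \pmod{n}$, because these would force $m \mid 2k$ or $m \mid 4k$, which is ruled out by $k \in \{1,\dots,m-1\}$ together with $e \mid k$ and the parity of $m/e$. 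Comparing coefficients of $x^{p^{\mu d}}$ on both sides of the functional equation, the left side contributes $a_{\mu-s} v_w^{p^{(\mu-s)d}} + a_\mu c_1^{p^{\mu d}} v_w^{p^{(\mu+s)d}}$ while the right contributes $b_{(\mu-s)/2}^r\, w$ when $\mu$ is odd and $c_2 b_{\mu/2}\, w^r$ when $\mu$ is even.

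For each $j\in\{0,\dots,n/2-1\}$, pair the equation at $\mu = 2j+s$ (odd) with that at $\mu = 2j$ (even); both involve the same $b_j$ on the right. Writing $P_j(v), Q_j(v)$ for their left-hand sides, I obtain $P_j(v) = b_j^r\, w$ and $Q_j(v) = c_2 b_j\, w^r$. If $b_j \ne 0$, raising the first equation to the $r$-th power and eliminating $w^r$ yields the polynomial identity $c_2 P_j(v)^r = b_j^{r^2-1} Q_j(v)$ in $v$; if $b_j = 0$, then $P_j = Q_j = 0$ directly. In either case, the coefficients of $v$ at the three \emph{distinct} Frobenius indices $2j-s$, $2j+s$, $2j+3s$ (distinctness by (ii)) force $a_{2j-s} = a_{2j+s} = 0$ for every $j$. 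Since $s$ is odd, the set $\{2j+s \bmod n : 0 \le j < n/2\}$ exhausts all odd residues mod $n$, so $f$ has no odd-indexed term, i.e., $f$ is $\E$-linear.

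With $f$ now $\E$-linear, the odd-$\mu = 2j+s$ equation collapses to $a_{2j} v^{p^{2jd}} = b_j^r\, w$. If two distinct indices $j_1 \ne j_2$ both had $a_{2j_1}, a_{2j_2} \ne 0$ (which forces $b_{j_1}, b_{j_2} \ne 0$), eliminating $w$ gives $(a_{2j_1}/b_{j_1}^r)\, v^{p^{2j_1 d}} = (a_{2j_2}/b_{j_2}^r)\, v^{p^{2j_2 d}}$ for every $v \in \M$, which forces $2j_1 \equiv 2j_2 \pmod{n}$ and hence $j_1 = j_2$, a contradiction. So at most one coefficient of $f$ is nonzero, i.e., $f$ is a monomial. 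The main obstacle is the index bookkeeping: setting up the parity of $\mu$ correctly, verifying that the three indices $2j\pm s, 2j+3s$ are genuinely distinct modulo $n$, and checking that facts (i) and (ii) actually hold under the standing hypotheses of Theorem \ref{thm:SF}; once these numerical lemmas are in place, the rest is a clean coefficient comparison.
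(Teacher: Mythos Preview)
Your argument is correct and follows the same overall scheme as the paper --- reduce to the $\D$-/\,$\E$-linear case and then do a coefficient comparison on linearized polynomials --- but the middle step is organized differently. The paper substitutes $w=1$ to obtain a reference identity and rewrites the general equation as
\[
f\circ(x^r v_w + c_1 x v_w^r) \;-\; w\cdot f\circ(x^r v_1 + c_1 x v_1^r)\;=\;c_2(w^r-w)\,g(x),
\]
so that the left side must be $\E$-linear; from the odd $r$-power coefficients of $f$ this yields an explicit binomial expression for $w_v$ and then narrows $f$ down to at most two adjacent nonzero coefficients, which a final comparison reduces to one. You instead read off, for each $j$, the pair of equations at exponents $\mu=2j+s$ (odd) and $\mu=2j$ (even), eliminate $w^r$ to get the identity $c_2 P_j(v)^r=b_j^{\,r^2-1}Q_j(v)$ in $v$, and use the distinctness of the three Frobenius indices $2j\!-\!s,\,2j\!+\!s,\,2j\!+\!3s$ (your facts (i)–(ii)) to kill all odd-indexed $a_\ell$ in one stroke; the remaining step is then immediate. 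Your route is a bit more direct and avoids ever writing $w_v$ as a polynomial in $v$, at the cost of the small number-theoretic check on $s$ modulo $n$. One point to tighten: you assert that $f$ and $g$ ``share the associated field automorphism $(\cdot)^{p^{j_0}}$'', but the lemma only assumes $f$ semilinear over~$\D$ and $g$ semilinear over~$\E$; that their automorphisms agree on $\D$ is forced by the functional equation (for $\lambda\in\D$, compare $f(L_{v_w}(\lambda x))=\sigma(\lambda)\,f(L_{v_w}(x))$ with $M_w(g(\lambda x))=\tau(\lambda)\,M_w(g(x))$), and the paper makes this reduction explicit at the end of its proof.
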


\begin{proof}
Let us first assume that $f$ and $g$ are linear over $\D$ and $\E$, respectively. We will deal with the general case afterwards.

We have for $w=1$
\[f\circ (x^r v_1 + c_1x v_1^r) = (x^r + c_2x)\circ g.\]

Rewriting the condition in the statement of the lemma, we get
\[(x^r w + c_2xw^r)\circ g=(x^r w + c_2x w + c_2x (w^r - w))\circ g\]

Combining these, we obtain
\begin{equation} \label{eq:intermediate}
   f\circ (x^r v_w + c_1x v_w^r) = wx\circ f\circ (x^r v_1 + c_1x v_1^r) + c_2x(w^r - w)\circ g,
\end{equation}

which means that
\begin{equation} \label{eq:funcelinear}
    f\circ (x^r v_w + c_1x v_w^r) - wx\circ f\circ (x^r v_1 + c_1x v_1^r)
\end{equation}
is $\E$-linear. Recall that $f$ is $\D$-linear, so write $f=\sum_{i=0}^{m/d-1} f_i x^{r^i}$. Similarly, write $f=\sum_{i=0}^{m/d-1} g_i x^{r^i}$, where $g_i=0$ for odd $i$, since $g$ is $\E$-linear and $[\E \colon \D]=2$. Then function in Eq.~\eqref{eq:funcelinear} can only be $\E$-linear if for all odd $i$,
\[f_{i-1} v_w^{r^{i-1}} + f_i c_1^{r^i} v_w^{r^{i+1}} = w(f_{i-1} v_1^{r^{i-1}} + f_i c_1^{r^i} v_1^{r^{i+1}}).\]

We know that $w\mapsto v_w$ is a bijection, so let $v\mapsto w_v$ be the inverse. Then, for all $v\in\mathbb{M}$,
\begin{equation} \label{eq:star}
    f_{i-1} v^{r^{i-1}} + f_i c_1^{r^i} v^{r^{i+1}} = w_v(f_{i-1} v_1^{r^{i-1}} + f_i c_1^{r^i} v_1^{r^{i+1}}).
\end{equation}

Suppose that
\[f_{i-1} v_1^{r^{i-1}} + f_i c_1^{r^i} v_1^{r^{i+1}} = 0.\]

Then,
\[f_{i-1} + f_i c_1^{r^i} v^{r^{i-1}(r^2-1)} = 0\]

for all $v\in\mathbb{M}^\times$ so since $v$ can be chosen arbitrarily, we must have $f_i=f_{i-1}=0$ (note that $r^2-1 \neq 0$ by the conditions on $r$). Now, we know that there is some odd $i$ where either $f_{i-1}\ne 0$ or $f_i\ne 0$ since $f\ne 0$. Call this $i_0$. Then, by Eq.~\eqref{eq:star}, $w_v$ is clearly a polynomial function of $v$ where the only monomials that occur are $v^{r^{i_0+1}}$ and $v^{r^{i_0-1}}$.

Now, suppose that
\[w_v = C_1v^{r^{i_0+1}} + D_1v^{r^{i_0-1}} = C_2v^{r^{i_1+1}} + D_2v^{r^{i_1-1}}.\]

By comparing degrees, we see that either $i_0=i_1$ or two powers are the same and the others have coefficients of $0$. This means that for $i\notin\{ i_0,i_0-1\}$, we have necessarily $f_i=0$. Rewriting Eq.~\eqref{eq:intermediate} in terms of the $f_i,g_i$ leads to
\[f_{i-1}(v^{r^{i-1}} - w_v v_1^{r^{i-1}}) + f_ic_1^{r^i}(v^{r^{i+1}}-w_v v_1^{r^{i+1}})=Ag_i (w_v^r - w_v)\]
for all $i$. 
Since $f_{i_0+1}=0$, we have
\[f_{i_0}(v_w^{r^{i_0}}-w v_1^{r^{i_0}}) = c_2g_{i_0+1}(w^r - w).\]
So either $f_{i_0}=g_{i_0+1}=0$ or ${(\frac{v_w}{v_1})}^{r^{i_0}}=w^r$ for all $w$.
Similarly, since $f_{i_0-2}=0$, we have
\[f_{i_0-1} c_1^{r^{i_0-1}}(v_w^{r^{i_0}}-w v_1^{r^{i_0}})=c_2g_{i_0-1} (w^r - w),\]
and either $f_{i_0-1}=g_{i_0-1}=0$ or ${(\frac{v_w}{v_1})}^{r^{i_0}}=w^r$. Since one of $f_{i_0}$ and $f_{i_0-1}$ are non-zero, we necessarily have that ${(\frac{v_w}{v_1})}^{r^{i_0}}=w^r$. This means that $w_v$ has only powers of $v^{r^{i_0-1}}$ and so only $f_{i_0-1}\ne 0$. Thus, $f$ is a monomial and $\E$-linear, since $i_0-1$ is even.

Now consider the case where $f$ and $g$ are only semilinear over $\D$ and $\E$, respectively. Then we may write $f=x^i \circ f'$ and $g=x^j \circ g'$, where $f',g'$ are linear over $\D$ and $\E$, respectively. Then 
    \[f\circ (x^r v_w + c_1x v_w^r) = (x^r w + c_2x w^r)\circ g\]
    holds if and only if
      \[x^{i-j}\circ f'\circ (x^r v_w + c_1x v_w^r) = (x^r w^{-i} + c_2^{-i}x w^{r-i})\circ g'.\] 
      Since the right hand is $\D$-linear, this means $x^{i-j}$ has to be $\D$-linear as well, so, by our previous result, $x^{i-j}\circ f'$ is a monomial.
\end{proof}

\begin{lemma}\label{lemma:finmonomial}
    Let $f,g \colon \M \rightarrow \M$.  Further, let $w\mapsto v_w$ be a bijection on $\M$ such that
    \[f\circ (x^r v_w + Ax v_w^r) = (x^r w + Ax w^r)\circ g\]
    holds for all $w\in\mathbb{M}$.  Then, $f$ is a monomial if and only if $g$ is  a monomial and both monomials are of the same degree.
\end{lemma}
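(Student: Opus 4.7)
The plan is a direct comparison of coefficients: write both $f$ and $g$ as linearized polynomials $\sum_i c_i x^{p^i}$, expand both sides of the functional equation, and match coefficients of each power of $x$. Set $s' \equiv s + k + m/2 \pmod m$ and $s'' \equiv s - k - m/2 \pmod m$. The three indices $s, s', s''$ are pairwise distinct: $k + m/2 \not\equiv 0 \pmod m$ because $m/\gcd(k,m)$ odd forces $k \ne m/2$, and $2(k+m/2) = 2k+m \not\equiv 0 \pmod m$ since otherwise $m\mid 2k$, again forcing $k \in \{0, m/2\}$.

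For the forward direction, assume $f(x) = a x^{p^s}$ with $a \in \M^\times$. The left-hand side expands to
\[
f(x^r v_w + A x v_w^r) = a\, v_w^{p^s}\, x^{p^{s'}} + a A^{p^s}\, v_w^{rp^s}\, x^{p^s},
\]
a linearized polynomial in $x$ supported only at $\{p^{s}, p^{s'}\}$. Writing $g(x) = \sum_j g_j x^{p^j}$, the coefficient of $x^{p^j}$ on the right-hand side is $w\, g_{j-k-m/2}^r + A w^r g_j$ (indices mod $m$). For each $j \notin \{s, s'\}$ this must vanish for every $w \in \M$; since the additive maps $w \mapsto w$ and $w \mapsto w^r$ are linearly independent as functions on $\M$ (using $r \ne 1$), both $g_j = 0$ and $g_{j - k - m/2} = 0$. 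Intersecting these constraints over all valid $j$ shows $g_j$ can be nonzero only for $j \in \{s, s'\} \cap \{s, s''\} = \{s\}$, so $g(x) = g_s x^{p^s}$ is a monomial of the same degree (with $g_s \ne 0$: otherwise $g \equiv 0$, but then $f \equiv 0$ by Lemma~\ref{lemma:rseparation}, contradicting $a \ne 0$).

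For the converse, assume $g(x) = b x^{p^s}$ with $b \in \M^\times$; the right-hand side equals $b^r w\, x^{p^{s'}} + Ab w^r x^{p^s}$. Writing $f(x) = \sum_i f_i x^{p^i}$, the coefficient of $x^{p^j}$ on the left-hand side is $f_{j-k-m/2}\, v_w^{p^{j-k-m/2}} + A^{p^j} f_j\, v_w^{rp^j}$, and for $j \notin \{s, s'\}$ it must vanish for every $w$. Since $w \mapsto v_w$ is a bijection, $v_w$ ranges over all of $\M$, so this becomes a linearized polynomial identity in a free variable $v$ with two distinct exponents; both coefficients vanish, giving $f_j = 0$ and $f_{j-k-m/2} = 0$. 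The same intersection argument then forces $f(x) = f_s x^{p^s}$ of the same degree.

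The main technical point is the careful index bookkeeping and verifying both the distinctness of $s, s', s''$ and the two linear-independence arguments (in $w$ for the first direction, in $v$ for the second). Both rest on the parity condition $m/\gcd(k,m)$ odd; once this is in place, the coefficient matching is routine.
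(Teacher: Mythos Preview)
Your proof is correct and follows the same approach as the paper: both arguments compare coefficients after expanding the two sides as linearized polynomials in $x$. The paper phrases it more tersely by first commuting the Frobenius monomial through the inner map (rewriting $f\circ(x^r v_w+Axv_w^r)$ as $(x^r v_w^{p^i}+A^{p^i}x(v_w^{p^i})^r)\circ x^{p^i}$) before matching, but the underlying computation and the index bookkeeping you carry out are exactly what makes that terse step work.
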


\begin{proof}
    If $f=x^i$, we have $f\circ (x^r v_w + Ax v_w^r) = (x^r v_w^i + A^ix v_w^{r+i}) \circ g$, so the condition becomes
    \[(x^r v_w^i + A^ix v_w^{r+i}) \circ f = (x^r w + Ax w^r)\circ g,\]
    implying that $g$ is a monomial as well. The other direction works in a similar way.
\end{proof}
Using the same idea, we get the following lemma as well.
\begin{lemma}\label{lemma:othermonomial}
Let $f,g \colon \M \rightarrow \M$ and
    let  further $w\mapsto u_w$ be a bijection on $\M$ such that
    \[f\circ (x^q u_w + x u_w^q) = (x^q w + x w^q)\circ g\]
     Then, $f$ is a monomial if and only if $g$ is a monomial. In that case, both monomials are of the same degree.
\end{lemma}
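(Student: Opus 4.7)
The plan is to mirror the strategy of Lemma~\ref{lemma:finmonomial}, proceeding by direct coefficient comparison on linearized polynomials. First I would dispense with the degenerate cases: if $f = 0$, then setting $w=1$ in the functional equation gives $g(x)^q + g(x) = 0$ for all $x\in\M$, so for any $x$ with $g(x)\neq 0$ we would get $g(x)^{q-1} = -1$, which is impossible by Lemma~\ref{lem_thm}(i); hence $g = 0$. Conversely, if $g = 0$, then $f\circ(x^q u_w + x u_w^q) = 0$ for all $w$, and choosing any $w$ with $u_w\neq 0$ (possible by bijectivity of $w\mapsto u_w$), Lemma~\ref{lem:binomial} makes $x^q u_w + x u_w^q$ bijective, forcing $f = 0$. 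So we may assume $f$ and $g$ are both nonzero.

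Now suppose $f(x) = a x^{p^i}$ with $a\in\M^\times$. By Frobenius,
\[f\circ(x^q u_w + x u_w^q) = a x^{p^{k+i}} u_w^{p^i} + a x^{p^i} u_w^{qp^i}.\]
Writing $g(x) = \sum_{j=0}^{m-1} g_j x^{p^j}$, the right-hand side of the functional equation expands as
\[g(x)^q w + g(x) w^q = \sum_j g_j^q w\, x^{p^{j+k}} + \sum_j g_j w^q\, x^{p^j}.\]
For each $\ell \not\equiv i, k+i \pmod m$, comparing the coefficient of $x^{p^\ell}$ yields $g_{\ell-k}^q w + g_\ell w^q \equiv 0$ as a function of $w\in\M$. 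Since $1\le k\le m-1$, this linearized polynomial in $w$ has two distinct exponents ($p^0$ and $p^k$), hence it vanishes identically only when $g_\ell = g_{\ell-k} = 0$. These two families of constraints force $\mathrm{supp}(g) \subseteq \{i, k+i\}\cap\{i-k, i\} = \{i\}$, provided $k+i\not\equiv i-k\pmod m$. But $2k\equiv 0\pmod m$ would force $k = m/2$, giving $e = \gcd(k,m) = m/2$ and hence $m/e = 2$ even, contradicting the standing hypothesis that $m/e$ is odd. Therefore $g(x) = g_i x^{p^i}$, a monomial of the same degree as $f$.

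The converse direction is essentially symmetric. Assuming $g(x) = b x^{p^j}$ with $b\neq 0$ and expanding $f = \sum_i f_i x^{p^i}$, the coefficient of $x^{p^\ell}$ for $\ell\not\equiv j, k+j \pmod m$ gives $f_{\ell-k} u_w^{p^{\ell-k}} + f_\ell u_w^{qp^\ell} \equiv 0$ for all $w\in\M$. Since $w\mapsto u_w$ is a bijection, $u_w$ ranges over all of $\M$, so the same linearized-polynomial argument (again using $k\neq 0, m/2$) yields $f_{\ell-k} = f_\ell = 0$, whence $f(x) = f_j x^{p^j}$.

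There is no substantive obstacle here; the proof is essentially bookkeeping with linearized polynomial coefficients. The single critical observation is that the standing hypothesis $m/e$ odd rules out $k = m/2$, which is precisely what prevents the two ``bad'' support positions $i+k$ and $i-k$ from collapsing onto each other and leaving $g$ (or $f$) as a genuine binomial rather than a monomial.
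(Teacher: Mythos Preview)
Your proof is correct and follows essentially the same approach as the paper, which simply says ``using the same idea'' as in Lemma~\ref{lemma:finmonomial}: pull the monomial through by Frobenius and compare coefficients across all $w$. Your write-up is in fact more careful than the paper's sketch---you spell out the support-intersection argument and isolate the one nontrivial point, namely that $2k\not\equiv 0\pmod m$ (equivalently $k\neq m/2$) is forced by the standing hypothesis that $m/e$ is odd.
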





We are now ready to state the main result of this section that states that any automorphism $(X,Y) \in \Aut(\mathcal{C}(\S_{q,A,B}))$ where $X$ is in diagonal form necessarily has only monomials $f_i,g_i$ as subfunctions.

\begin{theorem} \label{thm:monomials}

    Suppose that $(X,Y) \in \Aut(\mathcal{C}(\S_{q,A,B}))$ where $X$ is in diagonal form. Then, $f_1,f_4,g_1,g_2,g_3,g_4$ are all monomials.
\end{theorem}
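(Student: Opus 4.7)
The plan is to decouple the defining identities Eqs.~\eqref{eq:firstcomp} and~\eqref{eq:secondcomp} by specializing to $v=0$ or $u=0$, so that one of the two factors of the spread $R_{u,v}$ collapses to a pure $q$-type binomial $x^q u + x u^q$ (respectively $B(y^q v + y v^q)$) or $r$-type binomial $A y^r u + y u^r$ (respectively $x^r v + A x v^r$). Since the $g_i$ are additive, the right-hand side splits into an $x$-summand and a $y$-summand; only the summand matching the surviving variable can contribute to the left-hand side, yielding both a reduced equation in a single variable and a companion vanishing constraint in the other.

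Writing $(w, t) = (w_{u,0}, t_{u,0})$ and $(w', t') = (w_{0,v}, t_{0,v})$ for the bijection data attached to the two specializations, the $v=0$ reduction of Eq.~\eqref{eq:secondcomp} gives the $r$-type reduced equation
\[
f_4(A y^r u + y u^r) = g_2(y)^r t + A g_2(y) t^r + A g_4(y)^r w + g_4(y) w^r
\]
together with the vanishing constraint $g_1(x)^r t + A g_1(x) t^r + A g_3(x)^r w + g_3(x) w^r = 0$; the $v=0$ reduction of Eq.~\eqref{eq:firstcomp} and the $u=0$ reductions of both equations yield an analogous list of four reduced equations paired with four vanishing constraints. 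The images $V_1 = \{(w_{u,0}, t_{u,0}) : u \in \M\}$ and $V_2 = \{(w_{0,v}, t_{0,v}) : v \in \M\}$ are $m$-dimensional $\F_p$-subspaces of $\M^2$, since $(u,v) \mapsto (w, t)$ is $\F_p$-linear.

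The crux is to exploit the vanishing constraints to extract a clean $r$-type equation that fits Lemma~\ref{lemma:rmonomial}. Picking two $\F_p$-independent vectors in $V_2$ and substituting them into the $u=0$ first-equation vanishing constraint $g_1(x)^q w' + g_1(x) (w')^q + B\bigl(g_3(x)^q t' + g_3(x) (t')^q\bigr) = 0$, the non-$(q{-}1)$-th-power obstruction of Lemma~\ref{lem_thm}(i) together with Lemma~\ref{lem:binomial} narrows $V_2$ down to $\{w' = 0\}$ (making $v \mapsto t'$ a bijection $\M \to \M$) and forces $g_3$ into a monomial. The reduced $r$-type equation from $u=0$ then simplifies to $f_4 \circ (x^r v + A x v^r) = (x^r t' + A x (t')^r) \circ g_1$, and Lemma~\ref{lemma:rmonomial} yields $f_4$ monomial; Lemma~\ref{lemma:finmonomial} propagates monomiality to $g_1$. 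A parallel argument on $V_1$ using the $v=0$ vanishing constraint gives $g_2, g_4$ as monomials, and the simplified $q$-type equations (after absorbing $B$ into $f_1$) combined with Lemma~\ref{lemma:qnormal} and Lemma~\ref{lemma:othermonomial} extend monomiality to $f_1$.

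The main obstacle is that the vanishing constraints do not immediately force $g_3$ or $g_2$ to zero, so the collapse of $V_2$ and $V_1$ onto coordinate hyperplanes requires a careful case split on whether the projections of $V_i$ to each coordinate are surjective. In the degenerate subcase where both projections are surjective (so $V_i$ is the graph of an $\F_p$-linear map), comparing coefficients of $t^{p^j}$ in the vanishing identity forces that graph to be a monomial and pins down $g_3$ (or $g_2$) as a monomial of matching degree; in the alternative subcase, Lemma~\ref{lem_thm}(i) rules out the non-square obstruction and kills $g_3$ outright, returning us to the easy scenario in which the reductions and propagation lemmas close the argument.
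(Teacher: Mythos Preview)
Your specialization goes the wrong way. The paper fixes the \emph{output} side: since $(u,v)\mapsto(w,t)$ is a bijection, for every $t$ there is a unique $(u,v)$ with $w=0$, and substituting $w=0$ into Eqs.~\eqref{eq:al1}--\eqref{eq:al4} kills one summand on the right outright, leaving clean one-term identities such as $f_4\circ(x^r v+Axv^r)=(x^r t+Axt^r)\circ g_1$. Lemma~\ref{lemma:bijection} then shows that the surviving map $t\mapsto u$ (or $t\mapsto v$) is a bijection whenever the relevant $g_i$ is nonzero, so Lemma~\ref{lemma:rmonomial} applies immediately; Lemmas~\ref{lemma:finmonomial} and~\ref{lemma:othermonomial} then propagate monomiality to every $g_i$ and to $f_1$. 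There is no need to analyze any ``vanishing constraint'' or to determine the shape of the subspaces $V_1,V_2$.

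By contrast, your input-side specialization ($u=0$ or $v=0$) leaves both $w'$ and $t'$ alive on the right-hand side, and the argument you sketch for collapsing $V_2$ to $\{w'=0\}$ is not correct. In the anti-diagonal case $g_1=g_4=0$, $g_2,g_3\neq 0$ (which is a genuine possibility, cf.\ Proposition~\ref{prop:monomial} and Theorem~\ref{thm:antidiagonalcase}), your $u=0$ first-component constraint reduces to $B(g_3(x)^q t'+g_3(x)(t')^q)=0$; since $g_3$ is nonzero, Lemma~\ref{lem:binomial} forces $t'=0$, so $V_2=\{t'=0\}$ rather than $\{w'=0\}$. Your case split does not cover this, and the assertion that the constraint ``forces $g_3$ into a monomial'' is unsupported in either branch: the constraint is a relation between $g_1,g_3$ and the unknown subspace $V_2$, and gives no direct degree information on $g_3$. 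The approach could perhaps be repaired by splitting on which of $g_1,g_3$ (and $g_2,g_4$) vanish and in each case identifying whether $V_2$ collapses to $\{w'=0\}$ or $\{t'=0\}$, but this is strictly more work than the paper's route and your proposal does not carry it out.
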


\begin{proof}
    If $(X,Y) \in \Aut(\mathcal{C}(\S_{q,A,B}))$  then there exists for all $u,v\in\mathbb{M}$  some $w,t\in\mathbb{M}$ such that
    \[f_1\circ R^{(1)}_{u,v} = R^{(1)}_{w,t}\circ (g_1(x)+g_2(y),g_3(x)+g_4(y))\]
    and
    \[f_4\circ R^{(2)}_{u,v} =R^{(2)}_{w,t}\circ (g_1(x)+g_2(y),g_3(x)+g_4(y)).\]
So, we see that for all $u,v$ there exist some $w,t$ such that the following are all true:
\begin{align}
    f_1\circ (x^q u + x u^q) = (x^q w + x w^q)\circ g_1 + B(x^q t + x t^q)\circ g_3 \label{eq:al1}\\
    f_1\circ B(y^q v + y v^q) = (y^q w + y w^q)\circ g_2 + B(y^q t + y t^q)\circ g_4 \label{eq:al2}
\end{align}

Suppose that $w=0$. Then, for all $t$ there exists some $u$ such that
\[f_1\circ (x^q u + x u^q) = B(x^q t + x t^q)\circ g_3\]

and so by Lemma~\ref{lemma:qnormal} $f_1$ and $g_3$ are semilinear over $\E$ with the same associated field automorphism, or $g_3=0$. By the same logic, setting $w=0$ or $t=0$ and looking at Eq.~\eqref{eq:al1} and~\eqref{eq:al2}, we find that $f_1,g_1,g_2,g_3,g_4$ are all semilinear over $\E$ with the same associated field automorphism, or are the zero mapping.

Likewise, by Lemma~\ref{lemma:bijection} either $t\mapsto u_t$ for $w=0$ is a bijection or $g_3=0$. Similarly, considering Eq.~\eqref{eq:al2}, we get that $t\mapsto u_t$ for $w=0$ is a bijection or $g_4=0$. Since $g_3$ and $g_4$ cannot both be $0$, we have necessarily that $t\mapsto u_t$ for $w=0$ is a bijection.

Next, we see that for all $u,v$ there exist some $w,t$ such that:
\begin{align}
    f_4\circ (x^r v + Ax v^r) = (x^r t + Ax t^r)\circ g_1 + (Ax^r w + x w^r)\circ g_3\label{eq:al3}\\
    f_4\circ (Ay^r u + y u^r) = (y^r t + Ay t^r)\circ g_2 + (Ay^r w + y w^r)\circ g_4\label{eq:al4}
\end{align}
We proceed similarly to before. Let $w=0$. By Eqs.~\eqref{eq:al2} and~\eqref{eq:al3}, we have that for all $t$ there exists some $v$ such that
\[f_1\circ B(y^q v + y v^q) =  B(y^q t + y t^q)\circ g_4\text{ and } f_4\circ (x^r v + Ax v^r) = (x^r t + Ax t^r)\circ g_1.\]

Then, by Lemma~\ref{lemma:bijection} either $g_4=0$ or $t\mapsto v$ is a bijection. If the former is true then (since $f_1\neq 0$) we have $v=0$ for all $(t,w)$ where $w=0$. In this case, we see that $(x^r t + Ax t^r)\circ g_1=0$ for all $t$ so $g_1=0$ by Lemma~\ref{lemma:rseparation}.

If $g_4 \neq 0$, which is equivalent to $g_1 \neq 0$, then by  Lemma~\ref{lemma:rmonomial} we see that  $f_4$ is a monomial and by Lemma~\ref{lemma:finmonomial}, $g_1$ is a monomial as well.

We do likewise for all nonzero $g_i$ by picking different combinations from Eqs.~\eqref{eq:al1} to~\eqref{eq:al4}, and find that all nonzero $g_i$ and $f_1$ are monomials as well by invoking Lemmas~\ref{lemma:finmonomial} and~\ref{lemma:othermonomial}. 

If $g_1=g_4=0$ then $g_2,g_3$ are necessarily non-zero and we can again follow the same arguments as before by picking the correct combinations from Eqs.~\eqref{eq:al1} to~\eqref{eq:al4} to conclude that $f_1,f_4,g_2,g_3$ are monomials.
\end{proof}

\section{The general case} \label{sec:general}
In this section, we deal with the general case, i.e., we place no further restrictions on $(X,Y)\in \Aut(\mathcal{C}(\S_{q,A,B}))$ except that $X$ is semilinear over $\D$ and $Y$ is semilinear over $\E$. Our goal is to show that $X$ is necessarily in diagonal form, reducing it to the case investigated in the previous section.

\begin{proposition} \label{prop:f20}
    Suppose $(X,Y)\in \Aut(\mathcal{C}(\S_{q,A,B}))$ is an automorphism. Then $f_2=0$.
\end{proposition}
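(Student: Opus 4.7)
The plan is to substitute $u=0$ and $y=0$ into equation~\eqref{eq:firstcomp}. Since $R^{(1)}_{0,v}(x,0)=0$ and $R^{(2)}_{0,v}(x,0)=x^r v+Axv^r$, the first-coordinate condition collapses to the polynomial identity
\[
f_2(x^r v+Axv^r)\;=\;g_1(x)^qw+g_1(x)w^q+B\bigl(g_3(x)^qt+g_3(x)t^q\bigr)
\]
valid for every $v\in\M$, where $w=w(v),\,t=t(v)$ come from the bijection underlying $(X,Y)$. Writing the $\D$-semilinear $f_2$ as $\sum_{j=0}^{m/d-1}a_j\,x^{p^{\sigma+jd}}$ (with $\sigma$ the common Frobenius exponent of $X$), the left-hand side equals
\[
\sum_{j}a_j\,v^{p^{\sigma+jd}}\,x^{p^{k+m/2+\sigma+jd}}\;+\;\sum_{j}A^{p^{\sigma+jd}}a_j\,v^{p^{k+m/2+\sigma+jd}}\,x^{p^{\sigma+jd}}.
\]
Since $Y$ is $\E$-semilinear with common Frobenius exponent $\tau$, every $x$-monomial on the right-hand side has exponent congruent to $\tau$ modulo $e$.

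The next step is residue bookkeeping modulo $e$. From Notation~\ref{notation} we have $e=2d$ and $e\mid k$; moreover $m/2\equiv d\pmod e$ because $m/e$ is odd. Thus for each $j$ the two left-hand contributions carry $x$-exponents in opposite residue classes of $\{\sigma,\sigma+d\}\pmod e$, whereas the right-hand side only supplies monomials in one single residue class. Consequently, for every index $j$ at least one of the two terms it produces lies in the ``wrong'' residue class, so its coefficient (a single monomial in $v$) must vanish identically, forcing $a_j=0$ and hence $f_2=0$.

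The only threat to this reasoning is a collision at the same $x$-exponent between a first-type index $j$ and a second-type index $j'$ satisfying $(j'-j)d\equiv k+m/2\pmod m$. For a genuine cancellation, the corresponding $v$-exponents would also have to coincide, which requires the symmetric congruence $(j-j')d\equiv k+m/2\pmod m$; summing the two gives $2k\equiv 0\pmod m$, hence $k\in\{0,m/2\}$. Both are ruled out---the first by $1\le k\le m-1$ and the second because $k=m/2$ would give $e=m/2$ and $m/e=2$, contradicting the hypothesis that $m/e$ is odd. So the two $v$-exponents in any collision must differ, and both of the colliding coefficients must vanish separately, giving $a_j=a_{j'}=0$ as before. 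The degenerate scenario $g_1=g_3=0$ is dispatched directly: the identity collapses to $f_2(x^rv+Axv^r)=0$ for every $v$, whereupon $v=1$ combined with the invertibility of $x\mapsto x^r+Ax$ from Lemma~\ref{lemma:rseparation} yields $f_2=0$. The main technical obstacle is the residue bookkeeping modulo $e$ together with verifying that the parameter constraints of Theorem~\ref{thm:SF} rule out the degenerate collision scenario.
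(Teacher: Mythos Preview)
Your proof is correct and follows essentially the same route as the paper: specialize to $u=0$, $y=0$ in Eq.~\eqref{eq:firstcomp} and exploit the $\E$-semilinearity of the right-hand side against the $\D$-semilinearity of $f_2$. The only organizational difference is that the paper packages the residue-class computation into Lemma~\ref{lemma:qelim} (a $\D$-semilinear map $f$ with $f\circ(x^r u+Axu^r)$ $\E$-linear for all $u$ must vanish) and then applies it as a black box, whereas you unroll that lemma inline via explicit bookkeeping of $x$-exponents modulo $e=2d$. Your collision check $2k\not\equiv 0\pmod m$ is precisely the condition the paper uses inside the proof of Lemma~\ref{lemma:qelim} when it notes that $u\mapsto A^r u^{r^2-1}$ is nonconstant (since $r^2\equiv 1\pmod{p^m-1}$ iff $m\mid 2k$). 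So the two arguments are the same in content; the paper's version is slightly more modular, yours slightly more self-contained.
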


\begin{proof}
Consider the monomials in $x$ of Eq.~\eqref{eq:firstcomp} for $u=0$. Then, for all $v\in\mathbb{M}$,
\[f_2\circ (x^r v + Ax v^r) = (x^q w + x w^q)\circ g_1 + B(x^q t + x t^q)\circ g_3\]

holds for some $w,t\in\mathbb{M}$. We know that $g_1$ and $g_3$ are both semilinear over $\E$ with the same associated field automorphism since $Y$ is semilinear over $\E$. So we can write $g_1=x^{p^i} \circ h_1$ and $g_3=x^{p^i}\circ h_3$ for $\E$-linear mappings $h_1$ and $h_3$, and some $i$ where $0\le i\le e-1$. 

So, for all $v\in\mathbb{M}$ there exist some $w,t\in\mathbb{M}$ such that
\[x^{p^{-i}}\circ f_2\circ (x^r v + Axv^r) = (x^q w + x w^q)\circ h_1 + B(x^q t + x t^q)\circ h_3\]

And by Lemma~\ref{lemma:qelim}, $x^{p^{-i}}\circ f_2 = 0$ and so $f_2=0$.
\end{proof}

\begin{lemma}\label{lemma:q3elim}
    Suppose that for all $u \in \M$ there exist some $w,t \in \M$ such that
    \[(e + f\circ x^r)\circ (x^q u + x u^q) = (x^r t + Ax t^r)\circ g + (Ax^r w + xw^r)\circ h\]

    holds for $\E$-linear functions $e,f,g,h$. Then, $h=0$.
\end{lemma}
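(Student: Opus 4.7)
My plan is to split the identity into its low-degree and high-degree $x$-components, apply the $r$-th power Frobenius to match them up, and then eliminate $g$ to isolate $h$.

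First, both sides are $\D$-linear polynomials in $x$ whose monomials lie in the disjoint union of the ``low'' set $\{x^{p^{ei}}\}_{i=0}^{m/e-1}$ and the ``high'' set $\{x^{p^{ei+m/2}}\}_{i=0}^{m/e-1}$. Disjointness follows from $e \nmid m/2$, a consequence of $m/e$ being odd (more precisely, $e = 2d$ and $m/e$ odd give $v_2(e) > v_2(m/2)$). Splitting the equation accordingly yields
\begin{align*}
\textup{(L)} \quad & e(x^q u + x u^q) = A t^r g(x) + w^r h(x),\\
\textup{(H)} \quad & f\bigl((x^q u + x u^q)^r\bigr) = t\, g(x)^r + A w\, h(x)^r.
\end{align*}

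Next, raise (L) to the $r$-th power, using $(\sum a_i y^{p^{ei}})^r = \sum a_i^r (y^r)^{p^{ei}}$. Setting $e^{(r)}(y) := \sum a_i^r y^{p^{ei}}$ and $Y := (x^q u + x u^q)^r$, this gives $(\textup{L}^r)\colon e^{(r)}(Y) = A^r t^{r^2} g(x)^r + w^{r^2} h(x)^r$, which shares the argument $Y$ with (H). Eliminating $g(x)^r$ from the pair by computing $A^r t^{r^2}\cdot$(H) $-\, t\cdot$($\textup{L}^r$) yields the key identity
\[
(\star) \quad \bigl(A^r t^{r^2} f - t\, e^{(r)}\bigr)(Y) = D \cdot h(x)^r, \qquad D := A^{r+1} t^{r^2} w - t w^{r^2}.
\]

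The identity $(\star)$ must hold for every $u$ with some $(w,t) = (w(u), t(u))$. If $h \ne 0$, then $h(x)^r$ is a nonzero polynomial in $x$, so the LHS of $(\star)$ must be a scalar multiple of $h(x)^r$, with scalar $D$ depending on $u$. Expanding the LHS via $Y = x^{qr} u^r + x^r u^{qr}$ and matching coefficients of each $x^{p^{e\ell + m/2}}$ gives, for each $\ell$ with $q_{\ell-\kappa} \ne 0$ (where $\kappa := k/e$), an explicit expression for $D$ as a specific $\M$-linear combination (depending on $t$) of the two $u$-monomials $u^{p^{e(\ell-\kappa)+m/2}}$ and $u^{p^{e(\ell+\kappa)+m/2}}$, with coefficients built from $a_i, b_i$. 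Cross-comparing this expression for different $\ell$ with $q_{\ell-\kappa} \ne 0$, and combining with the analogous coefficient analysis of (L) alone (which forces $t^r, w^r$ to be polynomials in $u$ of restricted support, assuming the vectors $(Ap_j, q_j)_j$ span $\M^2$ over $\M$), one obtains an overdetermined system whose only consistent solution is $q_j = 0$ for every $j$, that is, $h = 0$. The main obstacle will be the degenerate case where $(Ap_j, q_j)_j$ fails to span $\M^2$, equivalent to $g$ and $h$ being $\M$-proportional; there the rank argument breaks down, and I would substitute $h = \lambda g$ back into the original equation, reducing the RHS to the $2$-dimensional $\M$-span of $g(x)$ and $g(x)^r$, and invoke the non-square property of $A$ (Lemma~\ref{lem_thm}(i)) together with the parameter conditions in Notation~\ref{notation} to force $\lambda = 0$ and conclude $h = 0$.
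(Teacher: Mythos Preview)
The printed conclusion ``$h=0$'' is a typo: the paper's proof actually shows $e=f=0$, and that is exactly what Proposition~\ref{prop:f30} uses (``By Lemma~\ref{lemma:q3elim}, we get $e=f=f_3=0$''). In fact $h=0$ is \emph{false} under the stated hypotheses: take $e=f=g=0$ and $h(x)=x$; for every $u$ the choice $w=0$ (and $t$ arbitrary) makes both sides identically zero, while $h\ne 0$. So you are aiming at a target that cannot be hit, and the vagueness from ``Cross-comparing\dots'' onward --- the overdetermined system is never written down, the rank hypothesis on $(Ap_j,q_j)_j$ is assumed rather than secured, and the degenerate case $h=\lambda g$ is dispatched by a bare citation of Lemma~\ref{lem_thm}(i) --- is a symptom of this: no refinement of that sketch can close the argument, because the statement you are trying to prove does not hold.

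Your low/high splitting into (L) and (H) is correct and is precisely what the paper does; its coefficient-wise version of (L) is Eq.~\eqref{eq:firstpart}, and the paper raises (H) (rather than your (L)) to the $r$-th power to obtain Eq.~\eqref{eq:secondpart}. From there the paper does not eliminate $g$ but instead, for each index $i$, performs a four-case split on which of $g_i,h_i$ vanish: when exactly one is nonzero, one of $t^r,w^r$ is forced to be a single $u$-monomial of degree $q^{i+1}$; when both are nonzero, the non-squareness of $A$ makes \eqref{eq:firstpart} and \eqref{eq:secondpart} linearly independent, so $t^r,w^r$ become specific trinomials in $u$. These shapes are incompatible across adjacent indices $i$ and $i+1$, and chasing the incompatibility yields $e_i=f_i=0$ for every $i$. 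If you redirect the coefficient computation you have already set up towards $e=f=0$, it feeds directly into this case analysis.
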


\begin{proof}
We write all $\E$-linear functions as linearized polynomials, e.g. $e = \sum e_ix^{q^i}$ and the same for $f,g,h$ which then have coefficients $f_i,g_i,h_i$. Comparing the monomials, 
we see that for all $i$,
\begin{equation} \label{eq:firstpart}
    e_{i-1} u^{q^{i-1}} + e_i u^{q^{i+1}} = At^r g_i + w^r h_i
\end{equation}

and
\[f_{i-1} u^{rq^{i-1}} + f_i u^{rq^{i+1}} = tg_i^r +Aw h_i^r\]

must hold. Taking the latter equation to the $r$-th power yields
\begin{equation} \label{eq:secondpart}
    f_{i-1}^r u^{q^{i+1}} + f_i^r u^{q^{i+3}} = t^r g_i^{q^2} +A^r w^r h_i^{q^2}.
\end{equation}

We now compare the powers of the monomials in Eqs.~\eqref{eq:firstpart} and~\eqref{eq:secondpart}. We  have to distinguish (for each $i$) the following different cases that can occur: \\
\noindent\textit{Case 1:} $g_i=0$ and $h_i=0$: Then clearly $e_{i-1}=e_i=f_{i-1}=f_i=0$ is necessary.

\noindent\textit{Case 2:} $g_i=0$ and $h_i\ne 0$: Then, $w^r=Cu^{q^{i+1}}$ for some constant $C$ and $e_{i-1}=f_{i}=0$.

\noindent\textit{Case 3:} $g_i\ne 0$ and $h_i=0$: Similarly, $t^r=Cu^{q^{i+1}}$ for some constant $C$ and $e_{i-1}=f_{i}=0$.

\noindent\textit{Case 4:} $g_i\ne 0$ and $h_i\ne 0$: We first check if Eqs.~\eqref{eq:firstpart} and~\eqref{eq:secondpart} are linearly dependent. If they are,then

\[\frac{g_i^{q^2-1}}{A}=A^r h_i^{q^2-1},\]

or equivalently
\[{\left(\frac{g_i}{h_i}\right)}^{r^2-1}=A^{r+1}.\]

So $A$ must be a square, which is impossible, so we know that Eqs.~\eqref{eq:firstpart} and~\eqref{eq:secondpart} are linearly independent. Then, clearly, $t^r$ and $w^r$ can both be written as trinomials $C_1u^{q^{i-1}}+C_2 u^{q^{i+1}}+C_3 u^{q^{i+3}}$. Here, we consider $t,w$ as linearized polynomials in $u$.\\
We now show that $e_i=f_i=0$ for all indices $i$, implying $e=f=0$.

If  Case 1 holds for some index $i_0$ then $e_{i_0}=f_{i_0}=0$ is immediate. 

Assume now Case 2 holds for some index $i_0$, so $w^r=Cu^{q^{i_0+1}}$ for some constant $C$ and $f_{i_0}=0$. Then we have Case 1 or Case 3 for $i_0+1$ (note for instance that Case 4 is impossible since $Cu^{q^{i_0+1}}$ is not of the form described in Case 4). In both cases $e_{i_0+1-1}=e{i_0}=0$, so we have $e_{i_0}=f_{i_0}=0$ as claimed.

If Case 3 holds for some index $i_0$, an identical argument as in Case 2 yields $e_{i_0}=f_{i_0}=0$.

If Case 4 occurs for some index $i_0$, this immediately leads to Case 1 for index $i_0+1$, which also leads to $e_{i_0}=f_{i_0}=0$. We conclude that $e_i=f_i=0$ for all indices $i$.
\end{proof}





\begin{proposition}\label{prop:f30}
    Suppose $(X,Y)\in \Aut(\mathcal{C}(\S_{q,A,B}))$ is an automorphism. Then $f_3=0$.
\end{proposition}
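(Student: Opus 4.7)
The plan parallels the proof of Proposition~\ref{prop:f20}. I would specialize Eq.~\eqref{eq:secondcomp} at $v=0$ and isolate the pure $x$-monomials (equivalently, set $y=0$); this kills the $f_4$-contribution on the left (since $R^{(2)}_{u,0}(x,0)=0$) and the $g_2,g_4$-parts on the right, leaving, for every $u\in\M$ with suitable $w,t\in\M$,
\[
f_3 \circ (x^q u + x u^q) \;=\; g_1(x)^r t + A\, g_1(x) t^r + A\, g_3(x)^r w + g_3(x) w^r.
\]

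Next, using the $\E$-semilinearity of $Y$ guaranteed by Theorem~\ref{thm:semilinear}, write $g_1 = x^{p^i}\circ h_1$ and $g_3 = x^{p^i}\circ h_3$ with $h_1,h_3$ $\E$-linear. Composing both sides on the left with $x^{p^{-i}}$ absorbs this Frobenius into the parameters and yields
\[
\phi\circ(x^q u + x u^q) \;=\; (x^r t' + A' x t'^r)\circ h_1 + (A' x^r w' + x w'^r)\circ h_3,
\]
where $\phi := x^{p^{-i}}\circ f_3$, $A' := A^{p^{-i}}$ (still a non-square in $\M$, as the Frobenius preserves squares), and $t' := t^{p^{-i}}$, $w' := w^{p^{-i}}$. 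This is precisely the shape of the equation in Lemma~\ref{lemma:q3elim}, whose proof forces the left-hand outer function to vanish \emph{provided} it admits a decomposition $e + f\circ x^r$ with $e,f$ $\E$-linear, i.e., provided $\phi$ is $\D$-linear.

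The main obstacle is that Theorem~\ref{thm:semilinear} only provides that $f_3$ is $\D$-semilinear, with some Frobenius $p^j$ a priori unrelated to $p^i$; consequently $\phi$ is $\D$-semilinear with Frobenius $p^{j-i}$, and is $\D$-linear precisely when $d\mid j-i$. I would resolve the remaining case by comparing the sets of $x$-exponents on the two sides. Because $d\mid k$ (from $d\mid e=\gcd(k,m)$) and $d\mid k+m/2$, every $x$-exponent on the right-hand side (built from $\E$-linear $h_1,h_3$, the Frobenius $x^r=x^{p^{k+m/2}}$, and the identity) lies in $d\Z/m$, while composition by $x^q u + x u^q$ shifts the exponents of $\phi$ only by $k$ or $0$, so every $x$-exponent on the left lies in the coset $(j-i)+d\Z/m$. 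When $d\nmid j-i$ these cosets are disjoint, so both sides must vanish identically in $x$ for every $u$; specializing $u=1$ and invoking the bijectivity of $x\mapsto x^q+x$ from Lemma~\ref{lem:binomial} then forces $\phi = 0$. When $d\mid j-i$, $\phi$ is $\D$-linear, decomposes as $e + f\circ x^r$ with $e,f$ $\E$-linear, and Lemma~\ref{lemma:q3elim} applies directly. In either case $\phi=0$, and hence $f_3=0$.
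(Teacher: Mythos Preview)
Your proof is correct and follows the same overall strategy as the paper: specialize Eq.~\eqref{eq:secondcomp} at $v=0$, isolate the $x$-part, strip off the Frobenius coming from the $\E$-semilinearity of $Y$, and feed the result into Lemma~\ref{lemma:q3elim} (whose proof actually yields $e=f=0$, which is what both you and the paper use).

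The one point where you diverge is the handling of $\D$-linearity. You compose with $x^{p^{-i}}$ on the \emph{left}, obtaining $\phi=x^{p^{-i}}\circ f_3$, and then worry that Theorem~\ref{thm:semilinear} only tells you $\phi$ is $\D$-semilinear with an a priori unrelated Frobenius, leading to your coset case split. This detour is valid but unnecessary: in your displayed identity
\[
\phi\circ(x^q u + x u^q) \;=\; (x^r t' + A' x t'^r)\circ h_1 + (A' x^r w' + x w'^r)\circ h_3,
\]
the right-hand side is visibly $\D$-linear (the $h_i$ are $\E$-linear and the outer maps involve only $x$ and $x^r$), and for any $u\neq 0$ the inner map $x\mapsto x^qu+xu^q$ is an $\E$-linear bijection by Lemma~\ref{lem:binomial}. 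Composing with its inverse shows $\phi$ is $\D$-linear outright, with no appeal to the Frobenius of $f_3$ and no case distinction. This is exactly how the paper proceeds (it composes on the right, which keeps $A$ rather than $A'$, but that is immaterial since Lemma~\ref{lemma:q3elim} only uses that the constant is a non-square). Your coset argument recovers the same conclusion by a longer route.
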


\begin{proof}
Consider the monomials in $x$ of Eq.~\eqref{eq:secondcomp} for $v=0$. Then, for all $u\in\mathbb{M}$,
\begin{equation} \label{eq:semilinear}
    f_3\circ (x^q u + x u^q) = (x^r t + Ax t^r)\circ g_1 + (Ax^r w + x w^r)\circ g_3
\end{equation}

holds for some $w,t \in \M$. Note that $g_1,g_3$ are semilinear over $\E$ with same associated field automorphism, say $x \mapsto x^{p^i}$ and $f_3$ is semilinear over $\D$. So we can rewrite Eq.~\eqref{eq:semilinear} as
\begin{equation} \label{eq:lasteq}
    f_3\circ x^{p^{-i}} \circ (x^q u + x u^q) = (x^r t + Ax t^r)\circ g_1' + (Ax^r w + x w^r)\circ g_3'
\end{equation}

where $g_1'$ and $g_3'$ are the $\E$-linear mappings associated to the semilinear mappings $g_1,g_3$. Note that the right hand side of Eq.~\eqref{eq:lasteq} is $\D$-linear, so $f_3\circ x^{p^{-i}}$ is necessarily $\D$-linear as well. Since $\E$ is a degree 2 extension of $\D$, we can write $f_3\circ x^{p^{-i}}=e + f\circ x^r$ where $e,f$ are $\E$-linear. By Lemma~\ref{lemma:q3elim}, we get $e=f=f_3=0$, proving our claim.
\end{proof}

Propositions~\ref{prop:f20} and~\ref{prop:f30} together thus show that $X$ is in diagonal form which was the goal of this section.
\section{Putting everything together: The proof of Theorem~\ref{thm:main}}
We are now ready to prove the main theorem of this work, Theorem~\ref{thm:main}.

\begin{proof}[Proof of Theorem~\ref{thm:main}]
    Let $(X,Y)\in \Aut(\mathcal{C}(\S_{q,A,B}))$. 
    By Propositions~\ref{prop:f20} and~\ref{prop:f30}, $X$ is in diagonal form. Then by Theorem~\ref{thm:monomials}, all subfunctions are monomials. By Proposition~\ref{prop:monomial}, $Y$ is then in diagonal or anti-diagonal form. These two cases are handled in Theorems~\ref{thm:diagonalcase} and~\ref{thm:antidiagonalcase}. Note that if $A^2$ is not an $(r-1)$-st power, then $A^{p^i\pm1}$ also cannot be an $(r-1)$-st power, except $A^{p^0-1}$. So in this case, all autotopisms are from Theorem~\ref{thm:diagonalcase} with $i=0$. If $A^2$ is an $(r-1)$-st power, then $A^{p^i+1}$ is an $(r-1)-st$ power if and only if $A^{p^i-1}$ is, so we can for each admissible $i$ exactly $2(p^m-1)(p^e-1)$ autotopisms, with $(p^m-1)(p^e-1)$ each from Theorem~\ref{thm:diagonalcase} and~\ref{thm:antidiagonalcase}. 
\end{proof}

\section{Conclusion}
We have determined the autotopism group of the family of semifields found in~\cite{golouglu2023exponential}, which is equivalent to the collineation group of the corresponding projective plane as well as the automorphism group of the corresponding rank-metric code/spread set. This, in particular, makes it much easier to decide if a given semifield is isotopic to these semifields since isotopic semifields have conjugate autotopism groups (see e.g.~\cite[Lemma 5.1.]{golouglu2023exponential}). For instance, if we find that some semifield $\S'$ has a subgroup that is not isomorphic to a subgroup of the group described in Theorem~\ref{thm:main}, we know that $\S'$ is not isotopic to the semifields of the G\"olo\u{g}lu-K\"olsch family investigated here. 

As a simple example, the autotopism group of the twisted fields is known~\cite{biliotti1999collineation}, and a simple inspection shows that the respective autotopism groups are non-isomorphic, and as such the generalized twisted fields and the semifields we investigated in this paper are never isotopic. Many similar results of this type can be found. 

We also want to highlight that a long standing conjecture is that autotopism groups of proper finite semifields are always solvable (see~\cite[Chapter VIII., Section 6]{hughesbook}). This is equivalent to the collineation group of semifield planes being solvable (note that this conjecture is false in general for tranlation planes, with for instance the Hall planes giving a counterexample~\cite{hughesbook}). There is not much evidence in favor of this conjecture, outside of some explicit examples of semifields with solvable autotopism group, some structural results listed in~\cite{hughesbook}, and the result that $A_5$ cannot appear as a subgroup of the autotopism group of a semifield~\cite{kravtsova2020alternating}. Theorem~\ref{thm:main} gives a new large family of semifields with known autotopism group, and as such gives more empirical evidence in favor of the conjecture. Indeed, our case is particularly important and interesting because the autotopism group is much larger than the subgroup generated by the nuclei, which was not the case in the previously known examples~\cite{dempwolff2011autotopism, hui2015autotopism}. The nuclei, of course, are always solvable subgroups, so it seems natural to expect possible counterexamples to the conjecture coming from semifields with small nuclei.

For future work, it would be beneficial to explicitly determine the autotopism groups of more semifields to make the isotopism question easier to handle. In particular, some recent large and unifying constructions of semifields have recently been found~\cite{kolsch2024unifying,lobillo2025quotients} and determining their automorphism groups would be very desirable. 

\subsection*{Acknowledgments}
The majority of this work was conducted when the second and third author visited the University of South Florida (USF) for the REU program on Cryptography and Coding Theory at USF in the summer of 2024, funded by NSF grant 2244488. The authors gracefully acknowledge the support. We would like to thank Luca Bastioni for many discussions around the project.

\bibliographystyle{amsplain}
\bibliography{semifields} 

\providecommand{\bysame}{\leavevmode\hbox to3em{\hrulefill}\thinspace}
\providecommand{\MR}{\relax\ifhmode\unskip\space\fi MR }
\providecommand{\MRhref}[2]{%
  \href{http://www.ams.org/mathscinet-getitem?mr=#1}{#2}
}
\providecommand{\href}[2]{#2}
\begin{thebibliography}{10}

\bibitem{biliotti1999collineation}
Mauro Biliotti, Vikram Jha, and Norman~L Johnson, \emph{The collineation groups
  of generalized twisted field planes}, Geometriae Dedicata \textbf{76} (1999),
  97--126.

\bibitem{dembowski1997finite}
Peter Dembowski, \emph{Finite geometries}, vol.~44, Springer Science \&
  Business Media, 1997.

\bibitem{dempwolff2011autotopism}
Ulrich Dempwolff, \emph{Autotopism groups of cyclic semifield planes}, Journal
  of Algebraic Combinatorics \textbf{34} (2011), 641--669.

\bibitem{golouglu2023exponential}
Faruk G{\"o}lo{\u{g}}lu and Lukas K{\"o}lsch, \emph{An exponential bound on the
  number of non-isotopic commutative semifields}, Transactions of the American
  Mathematical Society \textbf{376} (2023), no.~03, 1683--1716.

\bibitem{hughesbook}
Daniel~R Hughes and Frederick~Charles Piper, \emph{Projective planes}, vol.~6,
  Springer-Verlag, 1973.

\bibitem{hui2015autotopism}
Alice Hui, Yee~Ka Tai, and Philip Pit~Wang Wong, \emph{On the autotopism group
  of the commutative {D}ickson semifield {K} and the stabilizer of the {G}anley
  unital embedded in the semifield plane $\pi$ ({K})}, Innovations in Incidence
  Geometry: Algebraic, Topological and Combinatorial \textbf{14} (2015), no.~1,
  27--42.

\bibitem{Knuth65}
Donald~E. Knuth, \emph{Finite semifields and projective planes}, J. Algebra
  \textbf{2} (1965), 182--217. \MR{175942}

\bibitem{kolsch2024unifying}
Lukas K{\"o}lsch, \emph{A unifying construction of semifields of order
  {$p^{2m}$}}, arXiv preprint arXiv:2405.09068 (2024).

\bibitem{kravtsova2020alternating}
Ol'ga~Vadimovna Kravtsova, \emph{On alternating subgroup {$A_5$} in autotopism
  group of finite semifield plane}, Сибирские электронные
  математические известия \textbf{17} (2020), no.~0,
  47--50.

\bibitem{liebhold2016automorphism}
Dirk Liebhold and Gabriele Nebe, \emph{Automorphism groups of {G}abidulin-like
  codes}, Archiv der Mathematik \textbf{107} (2016), 355--366.

\bibitem{lobillo2025quotients}
FJ~Lobillo, Paolo Santonastaso, and John Sheekey, \emph{Quotients of skew
  polynomial rings: new constructions of division algebras and {MRD} codes},
  arXiv preprint arXiv:2502.13531 (2025).

\bibitem{MP12}
Giuseppe Marino and Olga Polverino, \emph{On the nuclei of a finite semifield},
  Theory and applications of finite fields, Contemp. Math., vol. 579, Amer.
  Math. Soc., Providence, RI, 2012, pp.~123--141. \MR{2975738}

\bibitem{sheekey}
John Sheekey, \emph{{MRD} codes: Constructions and connections}, Combinatorics
  and Finite Fields, de Gruyter, 2019, pp.~255--286.

\end{thebibliography}
\end{document}